\numberwithin{equation}{section}
                        \theoremstyle{plain}
\newcommand{\psdraw}[2]
         {\begin{array}{c} \hspace{-1.3mm}
         \raisebox{-4pt}{\psfig{figure=#1.eps,width=#2}}
         \hspace{-1.9mm}\end{array}}
\newcommand\no[1]{}
\newtheorem{theorem}{Theorem}[section]
\newtheorem{thm}{Theorem}
\newtheorem{cor}{Corollary}
\newtheorem{lemma}[theorem]{Lemma}
\newtheorem{proposition}[theorem]{Proposition}
\theoremstyle{definition}
\def\BC{\mathbb C}
\def\BZ{\mathbb Z}
\def\BP{\mathbb P}
\def\fb{\mathfrak b}
\def\la{\langle}
\def\ra{\rangle}
\DeclareMathOperator{\tr}{\mathrm tr}
\def\ve{\varepsilon}
\def\be { \begin{equation} }
\def\ee { \end{equation} }
\begin{document}

\title[Character varieties of links]
{Character varieties of some families of links}

\author[Anh T. Tran]{Anh T. Tran}
\address{Department of Mathematics, The Ohio State University, Columbus, OH 43210, USA}
\email{tran.350@osu.edu}

\thanks{2010 {\em Mathematics Classification:} 57M27.\\
{\em Key words and phrases: character variety, pretzel link, twisted Whitehead link, two-bridge link.}}

\begin{abstract}
In this paper we consider some families of links, including $(-2,2m+1,2n)$-pretzel links and twisted Whitehead links. We calculate the character varieties of these families, and determine the number of irreducible components of these character varieties.
\end{abstract}

\maketitle

\setcounter{section}{-1}

\section{Introduction}

\subsection{The character variety of a group} The set of representations of a finitely generated group $G$ into $SL_2(\BC)$ is an algebraic set defined over $\BC$, on which
$SL_2(\BC)$ acts by conjugation. The set-theoretic
quotient of the representation space by that action does not
have good topological properties, because two representations with
the same character may belong to different orbits of that action. A better
quotient, the algebro-geometric quotient denoted by $X(G)$
(see \cite{CS, LM}), has the structure of an algebraic
set. There is a bijection between $X(G)$ and the set of all
characters of representations of $G$ into $SL_2(\BC)$, hence
$X(G)$ is usually called the character variety of $G$. 

The character variety of a group $G$ is determined by the traces of some fixed elements $g_1, \cdots, g_k$ in $G$. More precisely, one can find $g_1, \cdots, g_k$ in $G$ such that for every element $g$ in $G$ there exists a polynomial $P_g$ in $k$ variables such that for any representation $\rho: G \to SL_2(\BC)$ one has $\tr(\rho(g)) = P_g(x_1, \cdots, x_k)$ where $x_j:=\tr(\rho(g_j))$. It is known that the character variety of $G$ is equal to the zero set of the ideal of the polynomial ring $\BC[x_1, \cdots, x_k]$ generated by all expressions of the form $\tr(\rho(u))-\tr(\rho(v))$, where $u$ and $v$ are any two words in the letters $g_1, \cdots, g_k$ which are equal in $G$. 

\subsection{Main results} In this paper we consider some families of links, including $(-2,2m+1,2n)$-pretzel links and twisted Whitehead links. We will calculate the character varieties of these families,  and determine the number of irreducible components of these character varieties. To state our results, we first introduce the Chebyshev polynomials of the first kind $S_k(t)$. They are defined recursively by $S_0(t)=1$, $S_1(t)=t$ and $S_{k+1}(t)=t S_k(t)-S_{k-1}(t)$ for all integers $k$. 

The character variety (the character ring, actually) of the $(-2,2m+1,2n)$-pretzel link has been calculated in \cite{Tr}. Note that the $(-2,1,-2)$-pretzel link is the two-component unlink. Its link group is $\BZ^2$ and hence its character variety is $\BC^3$ by the  Fricke-Klein-Vogt theorem, see \cite{LM}. The new result of the following theorem is the determination of the number of irreducible components of the character variety.

\begin{thm}
\label{pretzel-link}
(i) \cite[Thm. 2]{Tr} The character variety of the $(-2,2m+1,2n)$-pretzel link is the zero set of the polynomial
$$(x^2+y^2+z^2-xyz-4) \left[ (xz-y)S_{n-1}(\alpha)-(S_{m}(\beta)-S_{m-1}(\beta))S_{n-2}(\alpha) \right],
$$
where 
$$
\alpha = yS_{m-1}(\beta)-(xz-y)S_{m-2}(\beta) \quad \text{and} \quad 
\beta = xyz+2-y^2-z^2. 
$$

(ii) The number of irreducible components of the character variety of the $(-2,2m+1,2n)$-pretzel link is equal to $
\begin{cases} 
|n+1| &\mbox{if } m=0 \mbox{ and } n \not= -1,\\
m+1 &\mbox{if } m \ge 0 \mbox{ and } n = 0, \\ 
-m& \mbox{if } m \le -1 \mbox{ and } n = 0, \\
2 &\mbox{if } m=1 \mbox{ and } n \not\in \{2,3\}, \\ 
3 & \mbox{if } m=1 \mbox{ and } n \in \{2,3\}, \\
|m|+1 & \mbox{if } n=-1, \\
2 &\mbox{if } m \not\in \{0,1\} \mbox{ and } n \not\in \{-1,0\}.
\end{cases} 
$
\end{thm}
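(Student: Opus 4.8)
The plan is to analyze the zero set of the defining polynomial from part (i) factor by factor, and then determine exactly when the resulting pieces are irreducible, when they coincide, and when they split further. Write the defining polynomial as $F = F_{\mathrm{red}} \cdot G$, where $F_{\mathrm{red}} = x^2+y^2+z^2-xyz-4$ cuts out the closure of the reducible characters and $G = (xz-y)S_{n-1}(\alpha)-(S_m(\beta)-S_{m-1}(\beta))S_{n-2}(\alpha)$. Since $F_{\mathrm{red}}$ is a well-known irreducible polynomial (the Cayley cubic, always one component), the whole problem reduces to counting the irreducible components of $V(G)$ that are not contained in $V(F_{\mathrm{red}})$, and adding $1$ for $V(F_{\mathrm{red}})$ unless $V(F_{\mathrm{red}}) \subseteq V(G)$ (which one must check separately and which accounts for some of the exceptional small-$m$, small-$n$ cases).

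First I would treat the degenerate values of the parameters directly: $m=0$, $n=0$, and $n=-1$. When $n=0$ one has $S_{-1}=0$ and $S_{-2}=-1$, so $G$ collapses to $S_m(\beta)-S_{m-1}(\beta)$, a polynomial in $\beta$ alone; factoring $S_m(\beta)-S_{m-1}(\beta)$ over $\BC$ into linear factors in $\beta$ and noting that each $\beta - c$ defines an irreducible surface (since $\beta = xyz+2-y^2-z^2$ is irreducible and these surfaces are distinct and distinct from the Cayley cubic) gives the counts $m+1$ and $-m$ in the two sign regimes. When $m=0$, $S_0 - S_{-1} = 1$ and $\alpha = y$, so $G = (xz-y)S_{n-1}(y) - S_{n-2}(y)$; here I would use the Chebyshev identity to recognize this as (up to a unit) $S_{n-1}(y)\cdot\bigl(xz - \text{something in } y\bigr)$ after a short manipulation, extract the factor of $S_{n-1}(y)$ (which contributes $|n| - 1$ hyperplane components $y = 2\cos(k\pi/n)$ for $n \neq \pm 1$), and show the remaining factor together with the Cayley cubic accounts for the stated $|n+1|$. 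The case $n=-1$ is handled the same way with the roles of the recursion reversed, using $S_{-2}=-1$.

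For the generic case ($m \notin \{0,1\}$, $n \notin \{-1,0\}$) the claim is that $V(G)$ has exactly one component outside the Cayley cubic, so the answer is $2$. Here I would argue that $G$, viewed as a polynomial in the variables after the substitution $\beta = xyz+2-y^2-z^2$, is irreducible: the key is that $\beta$ and $\alpha$ are algebraically independent enough that $G$ cannot factor unless the "abstract" polynomial $(xz-y)S_{n-1}(A) - (S_m(B)-S_{m-1}(B))S_{n-2}(A)$ in auxiliary variables $A,B$ already factors in a way compatible with the substitution — and $\gcd(S_{n-1}, S_{n-2}) = 1$ prevents pulling out a common Chebyshev factor. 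The special value $m=1$ is genuinely exceptional because then $S_m(\beta)-S_{m-1}(\beta) = \beta - 1 = xyz+1-y^2-z^2$, which can share the factor $(xz-y)$-type behavior with the first term; I would show $G$ acquires the extra factor $x^2+y^2+z^2-xyz-2$ (or a relative of it), giving component count $3$, and that this factor further degenerates to match $V(F_{\mathrm{red}})$ or split when $n \in \{2,3\}$, producing the listed exceptions.

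The main obstacle I anticipate is the irreducibility argument for $G$ in the generic case: proving that a two-variable Chebyshev combination $(xz-y)S_{n-1}(\alpha) - (S_m(\beta)-S_{m-1}(\beta))S_{n-2}(\alpha)$ stays irreducible after the nonlinear substitutions for $\alpha$ and $\beta$ requires controlling the factorization of resultants $\mathrm{Res}_\beta$ and ruling out accidental coincidences of roots of the relevant Chebyshev polynomials; a convenient device is to specialize $x$ (say $x=0$, forcing $\alpha, \beta$ into one-variable polynomials in $y,z$) to detect any would-be factor, then show no such factor survives for general $x$. The small-parameter exceptional cases, by contrast, are finite in number and reduce to explicit factorizations of one-variable Chebyshev polynomials, so they are routine once the bookkeeping of which $V(\beta - c)$ coincide with which $V(\alpha - c')$ or with the Cayley cubic is done carefully.
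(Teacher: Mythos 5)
Your high-level strategy (split off the irreducible Cayley cubic, then count the irreducible factors of $Q=(xz-y)S_{n-1}(\alpha)-(S_m(\beta)-S_{m-1}(\beta))S_{n-2}(\alpha)$ case by case in $m,n$) is exactly the paper's, and your treatment of $n=0$ is correct. But several of your case analyses contain genuine errors. For $m=0$ you have the Chebyshev values backwards: $S_{-1}=0$ and $S_{-2}=-1$ give $\alpha=xz-y$ (not $\alpha=y$, which is the $m=1$ value), so $Q=(xz-y)S_{n-1}(xz-y)-S_{n-2}(xz-y)=S_n(xz-y)$, a product of irreducible quadric factors $xz-y-2\cos\frac{j\pi}{n+1}$; there is no factor $S_{n-1}(y)$ and no hyperplane components (indeed $\gcd(S_{n-1},S_{n-2})=1$ forbids the factorization you propose), and your count $(|n|-1)+1+1$ disagrees with $|n+1|$ when $n\le-2$. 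For $m=1$ you have the generic and exceptional situations reversed: there $\alpha=y$ and $Q=-xzS_{n-3}(y)+z^2S_{n-2}(y)+S_{n-4}(y)$ is \emph{irreducible} for $n\notin\{2,3\}$ (it is degree $1$ in $x$ with coefficient and remainder coprime), giving count $2$, and it splits only for $n\in\{2,3\}$ (into $(z-1)(z+1)$ and $z(yz-x)$ respectively); no factor $x^2+y^2+z^2-xyz-2$ ever appears. The case $n=-1$ is also not "the same with roles reversed": using Property 1.3 one finds $Q=S_{m-1}(\beta)\,R$ where $R=y(S_m(\beta)-S_{m-1}(\beta))-(xz-y)(S_{m-1}(\beta)-S_{m-2}(\beta))$, and proving $R$ irreducible requires the same machinery as the generic case.

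The central gap, which you yourself flag, is the irreducibility of $Q$ when $m\notin\{0,1\}$ and $n\notin\{-1,0\}$. "Algebraic independence of $\alpha$ and $\beta$" is not an argument, and specializing $x=0$ cannot by itself certify irreducibility of a trivariate polynomial (a factor may become constant or the specialization may lose degree). The paper's method is concrete and you would need something equivalent: since $\gcd(z,Q)=1$, substitute $x_1=xz-y$ so that $\beta$ becomes $x_1y+2-z^2$ and $z$ enters only through $z^2$; replace $z^2$ by a new variable $\beta_2$ and prove irreducibility in $\BC[x_1,y,\beta_2]$ by a further substitution $x_2=\alpha_2$ (legitimate after checking $\gcd(S_{m-2}(\beta_2),Q_2)=1$), which exhibits the polynomial as degree $1$ in $y$ with coefficient $S_{m-1}(\beta_2)S_{n-1}(x_2)$ coprime to the constant term, via the identities $S_m S_{m-2}=S_{m-1}^2-1$ and $xS_{n-1}-S_{n-2}=S_n$; finally descend from $\BC[x_1,y,z^2]$ to $\BC[x_1,y,z]$ by observing that a factorization there would force $Q_1\!\mid_{z=0}$ to be a perfect square, which is excluded by evaluating at $x_1=0$ (or $y=0$ when $n=2$). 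Without an argument of this precision the generic case, and hence the theorem, is not proved.
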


\begin{figure}[htpb]
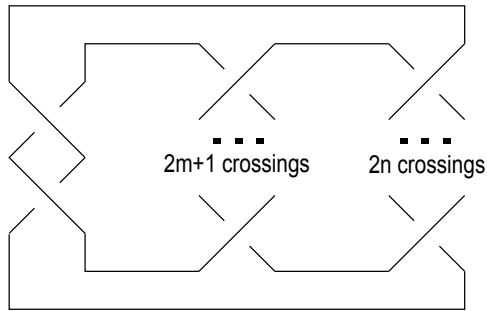

$$\psdraw{pretzellink}{2.5in}$$
\caption{The $(-2,2m+1,2n)$-pretzel link.}
\end{figure}

Let $\fb(2p,m)$ be the two-bridge link associated to a pair of relatively prime integers $p>m >0$, where $m$ is odd (see \cite{BZ}). It is known that $\fb(2p,1)$ is the $(2,2p)$-torus link, and $\fb(2p,m)$ is a hyperbolic link if $m \ge 3$. In the case $m=3$, we have the following.

\begin{thm}
\label{m=3}
The character variety of the two-bridge link $\fb(2p,3)$ is the zero set of the polynomial
$(x^2+y^2+z^2-xyz-4) Q_p(x,y,z)$, where $$
Q_p = \begin{cases} 
(x^2+y^2)S_n(z)S^2_{n-1}(z) - xyS_{n-1}(z)(S^2_n(z)+S^2_{n-1}(z))  + S_{3n}(z) &\mbox{if } p=3n+1 \\ 
(x^2+y^2)S^2_n(z)S_{n-1}(z) - xyS_{n}(z)(S^2_n(z)+S^2_{n-1}(z))  + S_{3n+1}(z) & \mbox{if } p=3n+2. 
\end{cases}
$$ It has exactly two irreducible components. 
\end{thm}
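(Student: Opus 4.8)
The plan is to split the character variety of $\fb(2p,3)$ into the locus of reducible characters and the locus of nonabelian characters, to identify each with an explicit irreducible surface, and then to count; write $V(f)\subset\BC^3$ for the zero set of $f\in\BC[x,y,z]$. First I would use the standard two-bridge presentation $G=\pi_1(S^3\setminus\fb(2p,3))=\langle a,b\mid wa=bw\rangle$, with $a,b$ meridians and $w=w(a,b)$ the standard two-bridge relator word associated with the fraction $2p/3$; since $2p$ is even, $\fb(2p,3)$ is a two-component link and $H_1(G)=\BZ^2$. For $\rho\colon G\to SL_2(\BC)$ put $A=\rho(a)$, $B=\rho(b)$ and $x=\tr A$, $y=\tr B$, $z=\tr AB$, so the character of $\rho$ is recorded by $(x,y,z)$ and $\tr[A,B]=x^2+y^2+z^2-xyz-2$; hence $\rho$ is reducible exactly on $V(\kappa)$, where $\kappa=x^2+y^2+z^2-xyz-4$. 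As $H_1(G)=\BZ^2$, the diagonal representations already map densely onto $V(\kappa)$, so the reducible characters form precisely $V(\kappa)$; and $\kappa$, regarded as a monic quadratic in $z$, has discriminant $(x^2-4)(y^2-4)$, which is not a square in $\BC[x,y]$, so $\kappa$ is irreducible. This supplies one of the two components. Here and below $p=3n+1$ or $p=3n+2$ with $n\ge1$ (since $p>3$).

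Next I would handle nonabelian (equivalently irreducible) characters, all of which satisfy $\kappa\neq0$. Putting such a $\rho$ into a standard normal form, say $A=\left(\begin{smallmatrix}s&1\\0&s^{-1}\end{smallmatrix}\right)$, $B=\left(\begin{smallmatrix}u&0\\-v&u^{-1}\end{smallmatrix}\right)$ with $x=s+s^{-1}$, $y=u+u^{-1}$ and $v$ a function of $x,y,z$, I would transcribe $wa=bw$ into the matrix identity $w(A,B)A=Bw(A,B)$. The structural feature to exploit --- and the reason the Chebyshev polynomials $S_k(z)$ show up in $Q_p$ --- is that for $\fb(2p,3)$ the word $w$ splits into three blocks of $2n$ or $2n+1$ alternating letters carrying signs $+,-,+$, so that up to a bounded correction $w$ is a product of powers $c^n$ of length-two subwords $c$ of trace $z$ (such as $ab$, $ba$, $a^{-1}b^{-1}$); applying $C^n=S_{n-1}(\tr C)\,C-S_{n-2}(\tr C)\,I$ in each block reduces the identity to a single scalar equation. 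Expanding every trace that occurs as a polynomial in $x,y,z$ (as recalled in the Introduction) and simplifying via $S_{k-2}=zS_{k-1}-S_k$ and the product identities for the $S_k$ should produce $Q_p(x,y,z)=0$; conversely, every $(x,y,z)\in V(Q_p)$ with $\kappa\neq0$ is realized by a nonabelian representation. Together with the reducible locus this identifies the character variety with $V(\kappa)\cup V(Q_p)=V(\kappa Q_p)$. \emph{This is the step I expect to be the main obstacle}: carrying the matrix identity all the way to the closed form of $Q_p$ is where the entire computation --- and the risk of a sign or indexing error in the Chebyshev bookkeeping --- sits; the rest is comparatively short.

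Finally I would prove $Q_p$ is irreducible in $\BC[x,y,z]$. Granting this, $V(\kappa Q_p)=V(\kappa)\cup V(Q_p)$ is a union of two irreducible surfaces, which are distinct because $\deg_z Q_p\ge3>2=\deg_z\kappa$; so the character variety has exactly two irreducible components, and moreover $V(Q_p)\not\subseteq V(\kappa)$, which retroactively justifies passing to closures above. To see the irreducibility, view $Q_p$ as a quadratic polynomial in $x$ over $\BC[y,z]$: its leading coefficient is $S_nS_{n-1}^2$ (for $p=3n+1$) or $S_n^2S_{n-1}$ (for $p=3n+2$), which is nonzero, and using $(S_n^2+S_{n-1}^2)^2-4S_n^2S_{n-1}^2=(S_n^2-S_{n-1}^2)^2=S_{2n}^2$ the $x$-discriminant collapses to $S_{n-1}^2\bigl(S_{2n}^2\,y^2-4S_nS_{3n}\bigr)$, respectively $S_n^2\bigl(S_{2n}^2\,y^2-4S_{n-1}S_{3n+1}\bigr)$. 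The bracketed polynomial is quadratic in $y$ with vanishing coefficient of $y^1$ and nonzero coefficient of $y^0$ equal to $-4S_nS_{3n}$ (resp.\ $-4S_{n-1}S_{3n+1}$), hence is not a square in $\BC[y,z]$; therefore neither is the discriminant, so $Q_p$ has no root in $\BC(y,z)$ and is irreducible over $\BC(y,z)[x]$. And $Q_p$ is primitive in $\BC[y,z][x]$: any common factor of its $x$-coefficients lies in $\BC[z]$ and must divide both $S_nS_{n-1}^2$ (resp.\ $S_n^2S_{n-1}$) and $S_{3n}$ (resp.\ $S_{3n+1}$), hence divides $\gcd(S_n,S_{3n})$ or $\gcd(S_{n-1},S_{3n})$ (resp.\ the analogues with $S_{3n+1}$), and the identity $\gcd(S_a,S_b)=S_{\gcd(a+1,b+1)-1}$ together with a short check at $z=0$ forces that factor to be a unit. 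By Gauss's lemma $Q_p$ is then irreducible in $\BC[x,y,z]$, completing the argument.
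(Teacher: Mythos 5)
Your overall architecture is sensible, and your irreducibility argument for $Q_p$ (viewing it as a quadratic in $x$, computing that the discriminant is $S_{n-1}^2(S_{2n}^2y^2-4S_nS_{3n})$ via $S_n^2-S_{n-1}^2=S_{2n}$, and noting that $ay^2+c$ with $a,c\neq 0$ is never a square) is a legitimate alternative to the paper's, which instead substitutes $x\mapsto x+y$, $y\mapsto x-y$ to make $Q_p$ linear in $y^2$ with coprime coefficients and then rules out a factorization $(yf+g)(yf-g)$. Your primitivity check does need the middle coefficient $-yS_{n-1}(S_n^2+S_{n-1}^2)$ and not just the leading and constant ones, since $\gcd(S_n,S_{3n})=z$ when $n$ is odd; your "short check at $z=0$" does close this, because $S_{n-1}(0)(S_n^2(0)+S_{n-1}^2(0))=\pm 1$ for odd $n$.

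However, there are two genuine gaps. First, the central assertion of the theorem is the explicit formula for $Q_p$, and you do not derive it: you describe the blockwise Chebyshev strategy and then say it "should produce" the stated polynomial, flagging this as the main obstacle. That step is the bulk of the paper's proof: one writes $w=(ba)^n(b^{-1}a^{-1})^nb^{-1}(ab)^n$ for $p=3n+1$, expands each block by $M^k=S_k(\tr M)I-S_{k-1}(\tr M)M^{-1}$, computes a table of six trace differences each of which carries a factor $(2-\gamma)$, and invokes the identity $S_n^3-3S_nS_{n-1}^2+zS_{n-1}^3=S_{3n}$ to assemble $Q_p$. Without this the theorem is not proved. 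Second, your identification of the character variety with $V(\kappa)\cup V(Q_p)$ rests on the unproved "conversely" clause (that every point of $V(Q_p)$ off $V(\kappa)$ is realized, and that no characters lie outside $V(\kappa Q_p)$); passing to closures of the reducible and irreducible loci does not by itself give the zero set of $\kappa Q_p$ on the nose. The paper sidesteps this dichotomy entirely by quoting the result of Le--Tran that the character ring of $\langle a,b\mid aw=wa\rangle$ is $\BC[x,y,z]$ modulo the single polynomial $P_{awa^{-1}b^{-1}}-P_{wb^{-1}}$, and then showing that this polynomial equals $(2-\gamma)Q_p$; you would need to either cite such a result or supply the surjectivity/exactness argument yourself. (Minor: for a two-component two-bridge link the relator is $aw=wa$, not $wa=bw$; the latter is the knot convention.)
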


For two-bridge knots $\fb(p,3)$, where $p>3$ is an odd integer relatively prime with 3, a similar result about the number of irreducible components of their character varieties has been obtained in \cite{Bu, MPL, NT}.

The two-bridge link $\fb(6n+2,3)$ can be realized as $1/n$ Dehn filling on one cusp of the magic manifold, see e.g. \cite{La-thesis}. Note that $\fb(8,3)$ is the Whitehead link. In \cite{La} Landes identified the canonical component of the character variety of the Whitehead link as a rational surface isomorphic to $\BP^2$ blown up at 10 points. Here the canonical component of the character variety of a hyperbolic link is the component that contains the character of a discrete faithful representation, see \cite{Th}. In her thesis \cite{La-thesis}, Landes conjectured that the canonical component of the character variety of the two-bridge link $\fb(6n+2,3)$ is a rational surface isomorphic to $\BP^2$ blown up at $9n+1$ points. In a forthcoming paper \cite{PT}, we will confirm Landes' conjecture for all integers $n \ge 1$.

Similarly, Harada \cite{Ha} identified the canonical component of the character variety of the two-bridge link $\fb(10,3)$ as a rational surface isomorphic to $\BP^2$ blown up at 13 points. In \cite{PT}, we will also prove that the canonical component of the character variety of the two-bridge link $\fb(6n+4,3)$ is a rational surface isomorphic to $\BP^2$ blown up at $9n+4$ points, for all integers $n \ge 1$.

For $k \ge 0$, the $k$-twisted Whitehead link $W_k$ is the two-component link depicted in Figure 2. Note that $W_0$ is the $(2,4)$-torus link, and $W_1$ is the Whitehead link. Moreover, $W_k$ is the two-bridge link $\fb(4k+4,2k+1)$ for all $k \ge 0$. These links are all hyperbolic except for $W_0$. Their character varieties are described as follows.

\begin{figure}[htpb]
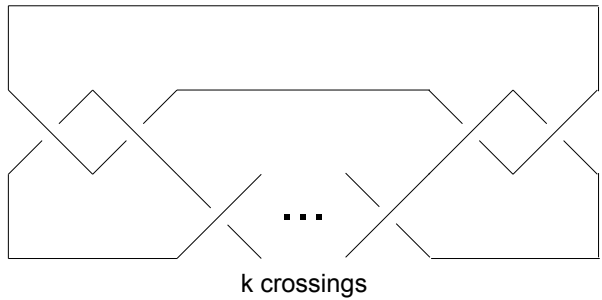

$$\psdraw{Whitehead}{3.1in} $$
\caption{The $k$-twisted Whitehead link $W_k$.}
\end{figure}

\begin{thm}
\label{twisted-Whitehead}
Let $\gamma=x^2+y^2+z^2-xyz-2$.

(i) The character variety of $W_{2n-1}$ is the zero set of the polynomial
$$(\gamma-2)\big( (xy-\gamma z)S_{n-1}(\gamma)-(xy-2z)S_{n-2}(\gamma) \big) \prod_{j=1}^{n-1}(\gamma-2\cos\frac{j\pi}{n}).$$
It has exactly $n+1$ irreducible components. 

(ii) The character variety of $W_{2n}$ is the zero set of the polynomial
$$(\gamma-2)\big( zS_{n}(\gamma)-(xy-z)S_{n-1}(\gamma) \big) \prod_{j=1}^{n}(\gamma-2\cos\frac{(2j-1)\pi}{2n+1}).$$
It has exactly $n+2$ irreducible components. 
\end{thm}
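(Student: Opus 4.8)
The plan is to compute $X(G(W_k))$, where $G(W_k)=\pi_1(S^3\setminus W_k)$, directly from a two-bridge presentation and then analyze the resulting polynomial. Since $W_k=\fb(4k+4,2k+1)$, I would start from $G(W_k)=\langle a,b\mid w_k a=a w_k\rangle$, where $a,b$ are meridians of the two components and $w_k=b^{\varepsilon_1}a^{\varepsilon_2}\cdots b^{\varepsilon_{4k+3}}$ with $\varepsilon_i=(-1)^{\lfloor i(2k+1)/(4k+4)\rfloor}$. Reading off the exponents, $w_k$ splits into alternating-sign syllables of length $2$ together with one central syllable of length $3$, and grouping the syllables shows (up to the evident symmetry) that
$$ w_{2n-1}=[b,a]^{n}\,(b^{-1}ab)\,[a^{-1},b^{-1}]^{n-1},\qquad w_{2n}=[b,a]^{n}\,(bab)\,[a^{-1},b^{-1}]^{n}, $$
so the twist region contributes a commutator raised to a power $\sim k/2$; the different behaviour of $2n-1$ and $2n$ is exactly what produces the two cases in the statement. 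Abelian representations factor through $H_1=\BZ^2$, so the abelian characters fill the irreducible surface $\{\gamma=2\}=\{x^2+y^2+z^2-xyz-4=0\}$, one component for every $W_k$, and everything else is nonabelian.

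For a nonabelian $\rho$ I would set $\rho(a)=\left(\begin{smallmatrix}s&1\\0&s^{-1}\end{smallmatrix}\right)$, $\rho(b)=\left(\begin{smallmatrix}t&0\\u&t^{-1}\end{smallmatrix}\right)$ with $x=s+s^{-1}$, $y=t+t^{-1}$, $z=\tr\rho(ab)$, so that the relation holds iff $\rho(w_k)$ commutes with $\rho(a)$. The elementary but crucial point is that $\tr\rho([b,a])=\tr\rho([a^{-1},b^{-1}])=x^2+y^2+z^2-xyz-2=\gamma$; hence, using $V^m=S_{m-1}(\tau)V-S_{m-2}(\tau)I$ for $V\in SL_2(\BC)$ with $\tr V=\tau$, the powers of $\rho([b,a])$ and $\rho([a^{-1},b^{-1}])$ in $\rho(w_k)$ become explicit combinations of $S_{m-1}(\gamma)$ and $S_{m-2}(\gamma)$. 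Substituting, the commuting condition becomes a polynomial equation, bilinear in the Chebyshev polynomials of $\gamma$ since $w_k$ carries two commutator powers, and by the Riley-type reduction for two-bridge links (see \cite{Tr} and the references therein) the pair of scalar conditions collapses to a single polynomial $\Phi_k(x,y,z)=0$; thus $X(G(W_k))$ is the zero set of $(\gamma-2)\Phi_k$.

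The heart of the argument is then to simplify $\Phi_k$. Using Chebyshev identities such as $S_{2m}=S_m^2-S_{m-1}^2$, $S_{m-1}^2-\gamma S_{m-1}S_{m-2}+S_{m-2}^2=1$, and $S_{m}-S_{m-2}=\prod_j(t-2\cos\frac{(2j-1)\pi}{2m})$, I expect to obtain
\begin{gather*}
\Phi_{2n-1}=S_{n-1}(\gamma)\,\big((xy-\gamma z)S_{n-1}(\gamma)-(xy-2z)S_{n-2}(\gamma)\big),\\
\Phi_{2n}=\big(S_n(\gamma)-S_{n-1}(\gamma)\big)\,\big(zS_n(\gamma)-(xy-z)S_{n-1}(\gamma)\big),
\end{gather*}
and then the root descriptions $S_{n-1}(\gamma)=\prod_{j=1}^{n-1}(\gamma-2\cos\frac{j\pi}{n})$ and $S_n(\gamma)-S_{n-1}(\gamma)=\prod_{j=1}^{n}(\gamma-2\cos\frac{(2j-1)\pi}{2n+1})$ recover the products in the theorem. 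Geometrically the extra factors arise because along $\{S_{n-1}(\gamma)=0\}$ (respectively $\{S_n(\gamma)-S_{n-1}(\gamma)=0\}$) the commutator power $\rho([b,a])^{n}$ equals $\pm I$ and the defining relation degenerates; converting this heuristic into the precise factorization through the Chebyshev bookkeeping is the step I expect to be the main obstacle.

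Finally I would count components. Each factor $\gamma-c$ with $c$ constant cuts out an irreducible (Cayley-type) cubic surface, giving $1+(n-1)=n$ such surfaces for $W_{2n-1}$ and $1+n=n+1$ for $W_{2n}$. The remaining factors $\Psi_n:=(xy-\gamma z)S_{n-1}(\gamma)-(xy-2z)S_{n-2}(\gamma)$ and $G_n:=zS_n(\gamma)-(xy-z)S_{n-1}(\gamma)$ each have the form $A(\gamma)\,xy+B(\gamma)\,z$ with $A,B\in\BC[\gamma]$ coprime (consecutive $S_m$ being coprime); fibering $\{\Psi_n=0\}$ over the $\gamma$-line, the generic fibre is the curve $A(c)xy+B(c)z=0$ inside $\{x^2+y^2+z^2-xyz=c+2\}$, which is irreducible, and there are no vertical components since $A,B$ have no common zero, so $\Psi_n$ (and likewise $G_n$) is irreducible. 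These surfaces are pairwise distinct: the $\gamma=c$ are distinct, and $\Psi_n$ is not divisible by any $\gamma-c$ that occurs because, e.g., $\Psi_n|_{\gamma=2\cos(j\pi/n)}=-S_{n-2}\!\big(2\cos\tfrac{j\pi}{n}\big)(xy-2z)\ne0$ and $\Psi_n|_{\gamma=2}=xy-2z\ne0$, using that $S_{n-1}$ and $S_{n-2}$ share no root (similarly for $G_n$). Hence $X(W_{2n-1})$ has exactly $1+(n-1)+1=n+1$ irreducible components and $X(W_{2n})$ has exactly $1+n+1=n+2$.
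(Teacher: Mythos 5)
Your route is essentially the paper's for the main computation: the same two-bridge presentation (your $[b,a]^{n}(b^{-1}ab)[a^{-1},b^{-1}]^{n-1}$ is literally the paper's $(bab^{-1}a^{-1})^{n}a(a^{-1}b^{-1}ab)^{n}$), the same expansion $V^{m}=S_{m-1}(\tau)V-S_{m-2}(\tau)I$ of the commutator powers, the same reduction of the relation to the single polynomial $P_{awa^{-1}b^{-1}}-P_{wb^{-1}}$ (this is the Le--Tran result the paper cites, rather than a Riley-type argument), and the factorizations you ``expect to obtain'' are exactly those the paper derives. The step you flag as the main obstacle is indeed where the paper's work lies: it computes the handful of trace polynomials $P_{u}$ appearing in the expansion (Lemma \ref{some-traces} and its analogue), checks that each relevant difference carries the factor $2-\gamma$, and assembles the product form; there is no shortcut around that bookkeeping, so this part of your proposal is a correct prediction rather than a proof. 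The one place you genuinely diverge is the irreducibility of the last factor $\Psi_n=A(\gamma)xy+B(\gamma)z$, where $A=S_{n-1}-S_{n-2}$ and $B=-(S_{n}-S_{n-2})$; these are coprime, but not simply because consecutive $S_m$ are coprime --- one must compare the root sets $2\cos\frac{(2j-1)\pi}{2n-1}$ and $2\cos\frac{(2j-1)\pi}{2n+1}$. The paper proves irreducibility by isolating the top $x$-degree term, specializing to $z=2$, changing variables $(x,y)\mapsto(x+y,x-y)$, and ruling out a factorization $(xf+g)(xf-g)$ because $S_{n-1}(\delta)-S_{n-2}(\delta)$ has only simple roots. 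Your alternative --- fibre $\{\Psi_n=0\}$ over the $\gamma$-line and note there are no vertical components since $A,B$ have no common zero --- is workable and arguably cleaner, but its crux, the irreducibility of the generic fibre, is asserted rather than proved: after solving $z=\lambda xy$ with $\lambda=-A(c)/B(c)$, the fibre becomes the quartic $x^{2}+y^{2}+(\lambda^{2}-\lambda)x^{2}y^{2}=c+2$, and one still has to verify that $\big((c+2)-y^{2}\big)/\big(1+(\lambda^{2}-\lambda)y^{2}\big)$ is not a square in $\BC(y)$ --- morally the same ``not a perfect square'' check the paper performs. Your component counts $1+(n-1)+1=n+1$ and $1+n+1=n+2$, and the distinctness verifications via $\Psi_n\mid_{\gamma=c}$, are correct.
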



The twisted Whitehead link $W_{2n-1}$ can be realized as $1/n$ Dehn filling on one of the cusps of the Borromean rings. In \cite{Ha}, Harada identified the canonical component of the character variety of $W_2$, which is the two-bridge link $\fb(12,5)$, as a rational surface isomorphic to $\BP^2$ blown up at 10 points. Hence it is an interesting problem to understand the canonical component of the character variety of $W_k$ for all integers $k \ge 0$.
\no
{In \cite{La}, Landes proved that the non-abelian character variety of $W_{2n-1}$, for $2 \le n \le 4$, has a component which is a rational surface isomorphic to $\BP^2$ blown-up at 7 points. 

As a consequence of Theorem 3, we will show the following.

\begin{cor}
For $k \ge 2$, the non-abelian character variety of $W_k$ has at least $\lfloor \frac{k}{2} \rfloor$ components which are rational surfaces isomorphic to $\BP^2$ blown up at 7 points.
\end{cor}

Note that the components in Corollary 1 are not the canonical component. On the other hand,}

\subsection{Plan of the paper} In Section 1 we review some properties of the Chebyshev polynomials of the first kind. In Section 2 we recall the calculation of the character variety of the $(-2,2m+1,2n)$-pretzel link from \cite{Tr}, and prove the part of Theorem 1 on the determination of the number of irreducible components of its character variety. In Section 3 we review character varieties of two-bridge links and prove Theorems 2 and 3.

\subsection{Acknowledgment} We would like to thank K. Petersen for helpful discussions.

\section{Properties of Chebyshev polynomials}

Recall from the Introduction that the Chebyshev polynomials $S_k(t)$ are recursively defined by $S_0(t)=1$, $S_1(t)=t$ and $S_{k+1}(t)=t S_k(t)-S_{k-1}(t)$ for all integers $k$. 

In this section we list some properties of $S_k(t)$ which will be repeatedly used in the rest of the paper. 

\textbf{Property 1.1.}
One has $S_k(2)=k+1$ and $S_k(-2)=(-1)^k(k+1)$ for all integers $k$. Moreover if $t=q+q^{-1}$, where $q \not= \pm 1$, then $S_k(t)=\frac{q^{k+1}-q^{-k-1}}{q-q^{-1}}$.

\textbf{Property 1.2.} 
One has $S_{-k}(t)=-S_{k-2}(t)$ for all integers $k$. For $k \ge 0$, $$S_k(t)=\prod_{j=1}^k (t-2\cos \frac{j\pi}{k+1}) \quad \text{and} \quad S_k(t)-S_{k-1}(t)=\prod_{j=1}^k (t-2\cos \frac{(2j-1)\pi}{2k+1}).$$ 

\textbf{Property 1.3.}
One has $S^2_{k}(t)+S^2_{k-1}(t)-tS_k(t)S_{k-1}(t)=1$ for all integers $k$. As a consequence, $\gcd(S_k(t),S_{k-1}(t))=1$ in $\BC[t]$.

\textbf{Property 1.4.}
One has $S^3_{k}(t)-3S_{k}(t)S^2_{k-1}(t)+tS^3_{k-1}(t)=S_{3k}(t)$ for all integers $k$.

\medskip

The proof of Property 1.1 is elementary and hence is omitted. Properties 1.2--1.4 are proved by applying Property 1.1. We will only prove Property 1.4, and leave the proofs of Properties 1.2--1.3 for the reader.

It is easy to see that we only need to check Property 1.4 for $t \not= \pm 2$. Write $t=q+q^{-1}$ for some $q \not= \pm 1$. Then, by Property 1.1, we have
\begin{eqnarray*}
&& S^3_{k}(t)-3S_{k}(t)S^2_{k-1}(t)+tS^3_{k-1}(t) \\
&=& \left( \frac{q^{k+1}-q^{-k-1}}{q-q^{-1}} \right)^3 - 3 \left( \frac{q^{k+1}-q^{-k-1}}{q-q^{-1}} \right) \left( \frac{q^{k}-q^{-k}}{q-q^{-1}} \right)^2+(q+q^{-1}) \left( \frac{q^{k}-q^{-k}}{q-q^{-1}} \right)^3 \\
&=& \frac{q^{3k+1}-q^{-3k-1}}{q-q^{-1}} = S_{3k}(t).
\end{eqnarray*}
This completes the proof of Property 1.4.

\section{$(-2,2m+1,2n)$-pretzel links}

In this section, we let $L$ denote the $(-2,2m+1,2n)$-pretzel link in Figure 1. From \cite[Thm. 2]{Tr}, we have that the character ring of $L$ is the quotient of the polynomial ring $\BC[x,y,z]$ by the principal ideal generated by the polynomial
$$(x^2+y^2+z^2-xyz-4) \left[ (xz-y)S_{n-1}(\alpha)-(S_{m}(\beta)-S_{m-1}(\beta))S_{n-2}(\alpha) \right],
$$
where $\alpha = yS_{m-1}(\beta)-(xz-y)S_{m-2}(\beta)$ and $\beta = xyz+2-y^2-z^2.$

Let $$Q(x,y,z)=(xz-y)S_{n-1}(\alpha)-(S_{m}(\beta)-S_{m-1}(\beta))S_{n-2}(\alpha).$$ Then the character variety of $L$ is the zero set of $(x^2+y^2+z^2-xyz-4)Q(x,y,z)$. 

To determine the number of components of the character variety of $L$, we need to study the factorization of $Q(x,y,z)$ in $\BC[x,y,z]$.

\subsection{The case $m=0$} Then $\alpha = xz-y$ and hence $$Q=(xz-y)S_{n-1}(xz-y)-S_{n-2}(xz-y)=S_n(xz-y).$$

If $n \ge 0$ then $Q = S_n(xz-y) = \prod_{j=1}^n (xz-y-2\cos \frac{j\pi}{n+1}),$ by Property 1.2. Similarly, if $n \le -2$ then $Q = -S_{-(n+2)}(xz-y) = -\prod_{j=1}^{-(n+2)} (xz-y-2\cos \frac{j\pi}{-(n+1)}).$

In this case we have the following.

\begin{proposition} 
\label{prop.first}
The number of components of the non-abelian character variety of the $(-2,1,2n)$-pretzel link, where $n \not= -1$, is equal to $
\begin{cases} n &\mbox{if } n \ge 0, \\ 
-(n+2)& \mbox{if } n \le -2.\end{cases} 
$
\end{proposition}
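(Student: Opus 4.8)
The plan is to read off the answer from the explicit factorization of $Q$ that was just derived. For $m=0$ we have $Q=S_n(xz-y)$, and Property 1.2 gives, up to sign, $Q=\prod_{j=1}^{|n+1|-1}(xz-y-2\cos\frac{j\pi}{|n+1|})$ (the product having $n$ factors when $n\ge 0$ and $-(n+2)$ factors when $n\le -2$). So the non-abelian character variety is the zero set of a product of linear-in-$(xz-y)$ polynomials, and the content of the proposition is exactly that these factors are pairwise non-associate and each cuts out an irreducible hypersurface in $\BC^3$. First I would record that $Q$ has no repeated factors: the numbers $2\cos\frac{j\pi}{|n+1|}$ for $1\le j\le |n+1|-1$ are pairwise distinct, so the factors $xz-y-2\cos\frac{j\pi}{|n+1|}$ are pairwise non-proportional, hence generate distinct principal ideals in $\BC[x,y,z]$.

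Next I would show that each factor $xz-y-c$ (with $c\in\BC$) is irreducible in $\BC[x,y,z]$. This is immediate: it is degree $1$ in $y$ with leading coefficient $-1$, so it is a primitive linear polynomial in $y$ over the UFD $\BC[x,z]$, hence irreducible; equivalently, $\BC[x,y,z]/(xz-y-c)\cong\BC[x,z]$ is an integral domain. Therefore the zero set of each factor is an irreducible surface in $\BC^3$, and since $Q$ is squarefree, the zero set of $Q$ is the union of these $|n+1|-1$ distinct irreducible surfaces, giving the stated count.

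The one genuine subtlety — and the step I would treat most carefully — is the bookkeeping of what "the non-abelian character variety" means and why passing from $(x^2+y^2+z^2-xyz-4)\,Q$ to $Q$ is legitimate here. The full character variety is $V(x^2+y^2+z^2-xyz-4)\cup V(Q)$, the first piece being the closure of the abelian (reducible) characters. For the non-abelian part one removes this abelian component; the claim is that no irreducible component of $V(Q)$ is contained in $V(x^2+y^2+z^2-xyz-4)$, equivalently that $x^2+y^2+z^2-xyz-4$ does not divide $xz-y-2\cos\frac{j\pi}{|n+1|}$ for any $j$ (clear by degree) and, conversely, that each surface $V(xz-y-c)$ is not swallowed by the abelian component. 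Since $x^2+y^2+z^2-xyz-4$ is irreducible of degree $3>1$, it cannot contain the degree-$1$ surface $V(xz-y-c)$; thus the $|n+1|-1$ surfaces above are exactly the non-abelian components and they are mutually distinct and distinct from the abelian one. Assembling these observations yields the count $n$ for $n\ge 0$ and $-(n+2)$ for $n\le -2$, which is the assertion of Proposition \ref{prop.first}. (The excluded value $n=-1$ is exactly the case $Q=S_{-1}(xz-y)=0$, i.e. the two-component unlink, where this description degenerates.)
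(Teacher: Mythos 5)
Your proposal is correct and follows essentially the same route as the paper: factor $Q=S_n(xz-y)$ via Property 1.2 into $|n+1|-1$ pairwise distinct factors of the form $xz-y-2\cos\frac{j\pi}{|n+1|}$, each manifestly irreducible, and count them. The paper leaves the irreducibility/distinctness and the separation from the abelian component implicit, whereas you spell them out; the only nitpick is that $xz-y-c$ has total degree $2$ (not $1$), though your actual argument — that it cannot be associate to the degree-$3$ irreducible $x^2+y^2+z^2-xyz-4$ — is unaffected.
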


\subsection{The case $m=1$} Then $\alpha =y$ and hence
\begin{eqnarray*}
Q &=& (xz-y)S_{n-1}(y)-(xyz+1-y^2-z^2)S_{n-2}(y)\\
  &=& -xzS_{n-3}(y)+z^2S_{n-2}(y)+S_{n-4}(y).
\end{eqnarray*} 

If $n=2$ then $Q=(z-1)(z+1)$. If $n=3$ then $Q=z(yz-x)$. Suppose now that $n \notin \{2,3\}$. Since $zS_{n-3}(y) \not\equiv 0$, $Q$ has degree 1 in $x$. This, together with the fact that $\gcd(zS_{n-3}(y), z^2S_{n-2}(y)+S_{n-4}(y))=1$, implies that $Q$ is irreducible in $\BC[x,y,z].$ 

In this case we have the following.

\begin{proposition}
The number of irreducible components of the non-abelian character variety of the $(-2,3,2n)$-pretzel knot is equal to 
$
\begin{cases} 1 &\mbox{if } n \not\in \{2,3\}, \\ 
2 & \mbox{if } n \in \{2,3\} .\end{cases} 
$
\end{proposition}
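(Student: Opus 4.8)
The plan is to count the irreducible components of the full character variety of the $(-2,3,2n)$-pretzel knot, which by Theorem~\ref{pretzel-link}(i) is the zero set of $(x^2+y^2+z^2-xyz-4)\,Q(x,y,z)$ with $Q$ as computed above for $m=1$. Since $x^2+y^2+z^2-xyz-4$ is irreducible in $\BC[x,y,z]$ (it is the defining polynomial of the abelian, or rather reducible-representation, component), the number of irreducible components equals one plus the number of irreducible factors of $Q$ that are not associates of $x^2+y^2+z^2-xyz-4$. The Proposition is stated for the \emph{non-abelian} character variety, i.e. the zero set of $Q$ alone, so what must be shown is simply: $Q$ is irreducible when $n \notin \{2,3\}$, and $Q$ has exactly two irreducible factors when $n \in \{2,3\}$.

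The cases $n=2$ and $n=3$ are immediate from the displayed factorizations: $Q=(z-1)(z+1)$ and $Q=z(yz-x)$ respectively, each a product of two distinct irreducible (indeed linear or near-linear) polynomials, and neither factor is associate to $x^2+y^2+z^2-xyz-4$; so in those cases the non-abelian character variety has two components. For the generic case $n\notin\{2,3\}$, I would argue as the excerpt already outlines: write $Q = z\,S_{n-3}(y)\cdot(-x) + \big(z^2 S_{n-2}(y) + S_{n-4}(y)\big)$, viewing $Q$ as a polynomial of degree $1$ in $x$ with coefficients in $\BC[y,z]$. The leading coefficient $-z\,S_{n-3}(y)$ is nonzero precisely when $n\neq 3$ (so that $S_{n-3}(y)\not\equiv 0$; note $S_{-1}(y)=0$ so one must also check $n\neq 2$, which is why $n=2$ is excluded), and the constant term is $z^2 S_{n-2}(y)+S_{n-4}(y)$.

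A polynomial that is degree one in a variable $x$ over a UFD $R=\BC[y,z]$ is irreducible in $R[x]$ iff its two coefficients are coprime in $R$ (it has no factor of positive $x$-degree by degree reasons, and no nonunit constant factor exactly when the content is trivial). So the crux is to verify $\gcd\!\big(z\,S_{n-3}(y),\ z^2 S_{n-2}(y)+S_{n-4}(y)\big)=1$ in $\BC[y,z]$. Since $z \nmid z^2 S_{n-2}(y)+S_{n-4}(y)$ (because $S_{n-4}(y)\not\equiv 0$ for $n\neq 4$; the case $n=4$ gives constant term $z^2 S_2(y) = z^2(y^2-1)$, still not divisible by the full expression's relevant factors, so handle it directly), it remains to see that no irreducible factor of $S_{n-3}(y)$ — which is a product of distinct linear factors $y-2\cos\frac{j\pi}{n-2}$ by Property~1.2 — divides $z^2 S_{n-2}(y)+S_{n-4}(y)$. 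But setting $y=2\cos\frac{j\pi}{n-2}$ forces $S_{n-3}(y)=0$, and then Property~1.3 gives $S_{n-2}(y)S_{n-4}(y)$-type relations; more directly, $S_{n-2}$ and $S_{n-3}$ are coprime and $S_{n-4}$, $S_{n-3}$ are coprime (Property~1.3), so at a root of $S_{n-3}$ both $S_{n-2}(y)$ and $S_{n-4}(y)$ are nonzero, whence $z^2 S_{n-2}(y)+S_{n-4}(y)$ is a nonzero polynomial in $z$ and does not vanish identically. Therefore the gcd is $1$, $Q$ is irreducible, and the non-abelian character variety has one component.

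The main obstacle is the bookkeeping of small-$n$ degeneracies: the formula $Q=-xzS_{n-3}(y)+z^2S_{n-2}(y)+S_{n-4}(y)$ relies on Chebyshev index identities valid for all integers, but the \emph{irreducibility} argument needs the leading coefficient $zS_{n-3}(y)$ genuinely nonzero and needs $S_{n-4}(y)\not\equiv 0$, which fails exactly at $n\in\{2,3\}$ and $n=4$; one should also note $S_{-k}(y)=-S_{k-2}(y)$ (Property~1.2) so negative $n$ reduces to the positive case. I would organize the proof as: (1) recall the $m=1$ specialization of $Q$; (2) dispose of $n=2,3$ by inspection; (3) for the remaining $n$, reduce to the coprimality of the two $x$-coefficients via the degree-one-in-$x$ criterion; (4) prove that coprimality using the distinct-roots factorization of $S_{n-3}$ (Property~1.2) together with the pairwise coprimality of consecutive and near-consecutive Chebyshev polynomials (Property~1.3), treating $n=4$ and any other boundary index by hand. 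Combining with the irreducible factor $x^2+y^2+z^2-xyz-4$ then also recovers the $m=1$ rows of Theorem~\ref{pretzel-link}(ii).
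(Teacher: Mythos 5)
Your proposal is correct and takes essentially the same route as the paper: dispose of $n=2,3$ by the explicit factorizations $(z-1)(z+1)$ and $z(yz-x)$, and otherwise observe that $Q$ has degree $1$ in $x$ with coprime coefficients $-zS_{n-3}(y)$ and $z^2S_{n-2}(y)+S_{n-4}(y)$ (the paper merely asserts this gcd is $1$; your argument via the distinct roots of $S_{n-3}$ and Property 1.3 supplies that detail). Do fix the off-by-one slips in your degeneracy bookkeeping: $S_{n-3}\equiv 0$ exactly when $n=2$ (not $n=3$) and $S_{n-4}\equiv 0$ exactly when $n=3$ (not $n=4$), so $n=4$ needs no special treatment and its constant term is $z^2(y^2-1)+1$ rather than $z^2(y^2-1)$.
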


\subsection{The case $n=0$} Then $Q=S_{m}(\beta)-S_{m-1}(\beta)$, where $\beta = xyz+2-y^2-z^2.$ 

If $m \ge 0$ then $Q = \prod_{j=1}^m (xyz+2-y^2-z^2-2\cos \frac{(2j-1)\pi}{2m+1})$, by Property 1.2. Similarly, if $m \le -1$ then $Q = S_{-m-1}(\beta)-S_{-m-2}(\beta) = \prod_{j=1}^{-(m+1)} (xyz+2-y^2-z^2-2\cos \frac{(2j-1)\pi}{-(2m+1)}).$ 

Note that, for any $\delta \in \BC$, the polynomial $xyz-y^2-z^2+ \delta$ is irreducible in $\BC[x,y,z]$. Hence, in this case we have the following.

\begin{proposition} The number of irreducible components of the non-abelian character variety of the $(-2,2m+1,0)$-pretzel link is equal to $
\begin{cases} m &\mbox{if } m \ge 0, \\ 
-(m+1)& \mbox{if } m \le -1.\end{cases} 
$
\end{proposition}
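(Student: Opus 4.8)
The plan is to factor $Q=S_m(\beta)-S_{m-1}(\beta)$ where $\beta=xyz+2-y^2-z^2$, and then count the irreducible components of the zero set of $(x^2+y^2+z^2-xyz-4)Q$, being careful to separate the abelian part from the non-abelian part. First I would apply Property 1.2 directly: for $m\ge 0$ we have $S_m(\beta)-S_{m-1}(\beta)=\prod_{j=1}^m(\beta-2\cos\frac{(2j-1)\pi}{2m+1})$, and using $S_{-m}(t)=-S_{m-2}(t)$ from Property 1.2 one rewrites the $m\le -1$ case as $S_{-m-1}(\beta)-S_{-m-2}(\beta)=\prod_{j=1}^{-(m+1)}(\beta-2\cos\frac{(2j-1)\pi}{-(2m+1)})$. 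So $Q$ is always a product of $|m|$ (resp. $-(m+1)$) linear-in-$\beta$ factors of the shape $xyz-y^2-z^2+\delta$ for distinct constants $\delta$ — distinct because the relevant cosine values are pairwise distinct.

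Next I would argue that each factor $xyz-y^2-z^2+\delta$ is irreducible in $\BC[x,y,z]$. This is quick: viewed as a polynomial in $x$ it has degree $1$ (coefficient $yz$, which is not the zero polynomial), and $\gcd(yz,\,-y^2-z^2+\delta)=1$ in $\BC[y,z]$ since $yz$ and $y^2+z^2-\delta$ share no common factor, so the polynomial is primitive and of degree one over $\BC[y,z]$, hence irreducible over $\BC[x,y,z]=\BC[y,z][x]$. Since the $\delta$'s are distinct, these factors are pairwise non-associate, so $Q$ has exactly $|m|$ distinct irreducible factors when $m\ge 1$, and $-(m+1)$ when $m\le -1$; when $m=0$, $Q=S_0(\beta)-S_{-1}(\beta)=1$, contributing nothing.

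Finally I would identify the zero set of each $xyz-y^2-z^2+\delta$ as a component of the non-abelian character variety of the $(-2,2m+1,0)$-pretzel link, and observe that the factor $x^2+y^2+z^2-xyz-4$ cuts out the closure of the reducible (abelian) characters, so it is excluded from the non-abelian character variety. One should check that none of the irreducible hypersurfaces $\{xyz-y^2-z^2+\delta=0\}$ is contained in $\{x^2+y^2+z^2-xyz-4=0\}$ — immediate since these are distinct irreducible polynomials of the same total degree, hence non-proportional — so the non-abelian character variety is exactly the union of the $|m|$ (resp. $-(m+1)$) hypersurfaces, yielding the stated count. The one point requiring a little care is the bookkeeping for negative $m$, making sure the index shift from Property 1.2 is applied correctly and that the resulting cosine arguments still give distinct $\delta$'s; that is the only place a sign or off-by-one error could creep in, but it is routine. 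I expect no genuine obstacle here — the proposition follows essentially formally from Properties 1.2 and a one-line irreducibility check.
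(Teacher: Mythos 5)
Your proposal is correct and follows essentially the same route as the paper: apply Property 1.2 to write $Q=S_m(\beta)-S_{m-1}(\beta)$ as a product of $|m|$ (resp. $-(m+1)$) factors of the form $xyz-y^2-z^2+\delta$ with distinct $\delta$, and invoke the irreducibility of each such factor. Your extra details (the degree-one-in-$x$ irreducibility check and the distinctness of the cosine values) are exactly the points the paper leaves implicit.
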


\subsection{The case $n=-1$} Then
\begin{eqnarray*}
Q &=& -(xz-y)+(S_{m}(\beta)-S_{m-1}(\beta))(yS_{m-1}(\beta)-(xz-y)S_{m-2}(\beta)) \\
         &=& (y-xz)  \left[ 1+(S_{m}(\beta)-S_{m-1}(\beta))S_{m-2}(\beta) \right]+y(S_{m}(\beta)-S_{m-1}(\beta))S_{m-1}(\beta),
\end{eqnarray*}
where $\beta = xyz+2-y^2-z^2$. By Property 1.3, we have $$1+S_{m}(\beta)S_{m-2}(\beta)=1+S_{m}(\beta) (tS_{m-1}(\beta)-S_m(\beta))=S^2_{m-1}(\beta).$$ It follows that $1+(S_{m}(\beta)-S_{m-1}(\beta)) S_{m-2}(\beta)=S_{m-1}(\beta) (S_{m-1}(\beta) - S_{m-2}(\beta))$. Hence
$$Q=S_{m-1}(\beta) \left[ y(S_{m}(\beta)-S_{m-1}(\beta)) -(xz-y)(S_{m-1}(\beta) - S_{m-2}(\beta))\right].$$

In this case, we will prove the following.

\begin{proposition}
\label{n=-1}
The number of irreducible components of the non-abelian character variety of the $(-2,2m+1,-2)$-pretzel link is equal to $|m|$ if $m \not= 0$.
\end{proposition}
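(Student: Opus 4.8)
My plan is to analyze the factorization of $Q = S_{m-1}(\beta)\,R$ in $\BC[x,y,z]$, where I set $R = y\big(S_m(\beta)-S_{m-1}(\beta)\big)-(xz-y)\big(S_{m-1}(\beta)-S_{m-2}(\beta)\big)$, so that the non-abelian character variety is $V(S_{m-1}(\beta))\cup V(R)$. I first reduce to $m\ge 1$: the involution $\phi$ of $\BC[x,y,z]$ fixing $x$ and $z$ and sending $y\mapsto xz-y$ also fixes $\beta=xyz+2-y^2-z^2$, and using $S_{-k}=-S_{k-2}$ (Property 1.2) one checks directly that $\phi$ carries the polynomial $Q$ for parameter $-\ell$ to the polynomial $Q$ for parameter $\ell$; since $\phi$ induces an automorphism of $\BC^3$, the two character varieties have the same number of irreducible components, so it suffices to treat $m\ge 1$.

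The main point, and the step I expect to be the principal obstacle, is the irreducibility of $R$ in $\BC[x,y,z]$ for $m\ge 1$. Substituting $xyz=\beta-2+y^2+z^2$ into $R$ gives the polynomial identity $yR = y^2A(\beta)-(\beta-2+z^2)B(\beta)$ with $A=S_m-S_{m-1}$ and $B=S_{m-1}-S_{m-2}$. I then pass to the localization $\BC[x,y,z][(yz)^{-1}]$; since $x=(\beta+y^2+z^2-2)/(yz)$ in this ring, it equals $\BC[w,y,z][(yz)^{-1}]$ with the new free variable $w:=\beta$ and with $y$ a unit, so $R$ is an associate of $G:=y^2A(w)-(w-2+z^2)B(w)$. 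I claim $G$ is irreducible in $\BC[w,y,z]$: view it as a quadratic in $z$ with leading coefficient $-B(w)$. A factor of $z$-degree $0$ must lie in $\BC[w]$ and divide both $B(w)$ and the $z^0$-coefficient $y^2A(w)-(w-2)B(w)$, hence divide $\gcd\big(A(w),B(w)\big)$; but this gcd is $1$, as follows from $\gcd(S_{m-1},S_{m-2})=1$ (Property 1.3) together with $S_{m-1}(2)=m\neq m-1=S_{m-2}(2)$ (Property 1.1) via the defining recursion. A splitting into two factors of $z$-degree $1$ forces, on matching the (vanishing) coefficient of $z^1$, the rational function $\frac{A(w)}{B(w)}y^2-(w-2)$ to be a square in $\BC(w,y)$, which is impossible since a polynomial $P(w)y^2+Q(w)$ with $P,Q\neq 0$ is never a square. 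Hence $G$ is irreducible; since neither $y$ nor $z$ divides $G$ (in $\BC[w,y,z]$) or $R$ (in $\BC[x,y,z]$ — e.g. $R|_{z=0}=y\big(S_m(2-y^2)-S_{m-2}(2-y^2)\big)\neq 0$), irreducibility descends and $R$ is irreducible in $\BC[x,y,z]$.

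Granting this, I finish the count for $m\ge 1$. By Property 1.2, $S_{m-1}(\beta)=\prod_{j=1}^{m-1}\big(\beta-2\cos\frac{j\pi}{m}\big)$, each factor $\beta-2\cos\frac{j\pi}{m}=xyz-y^2-z^2+\big(2-2\cos\frac{j\pi}{m}\big)$ is irreducible (as already observed for polynomials of this shape in the $n=0$ case), and the $m-1$ factors are pairwise non-associate, so $V(S_{m-1}(\beta))$ contributes exactly $m-1$ components. Moreover $V(R)$ is distinct from all of these: rewriting $R=(y\beta-xz)S_{m-1}(\beta)+(xz-2y)S_{m-2}(\beta)$ and restricting to $\{\beta=2\cos\frac{j\pi}{m}\}$ — where $S_{m-1}$ vanishes but $S_{m-2}$ does not — yields $R\equiv S_{m-2}\big(2\cos\frac{j\pi}{m}\big)\,(xz-2y)\not\equiv 0$, so no $\beta-2\cos\frac{j\pi}{m}$ divides $R$. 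Thus $V(Q)$ has exactly $(m-1)+1=m=|m|$ irreducible components, and the case $m=1$ — where $S_0(\beta)=1$ and $Q=R$ — is included. Combined with the reduction of the first paragraph, this gives the asserted count $|m|$ for all $m\neq 0$.
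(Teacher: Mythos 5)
Your proof is correct, and while its skeleton (factor $Q=S_{m-1}(\beta)R$, make $\beta$ a coordinate via a birational change of variables, then reduce irreducibility of $R$ to the coprimality $\gcd\bigl(S_m-S_{m-1},\,S_{m-1}-S_{m-2}\bigr)=1$) matches the paper's, several of your choices genuinely diverge. The paper argues uniformly in $m$ and needs no sign reduction; your involution $y\mapsto xz-y$ fixing $\beta$, with $Q_{-m}=\phi(Q_m)$, is a clean extra observation that would let one restrict to $m\ge 1$ elsewhere too. For irreducibility the paper first substitutes $x_1=xz-y$ (localizing at $z$), obtains a polynomial of degree $1$ in $y$ over $\BC[x_1,\beta_2]$, and then descends from $\BC[x_1,y,z^2]$ to $\BC[x_1,y,z]$ by a perfect-square argument at $z=0$; you instead localize at $yz$, multiply by $y$ to get $G=y^2A(w)-(w-2+z^2)B(w)$, and treat $G$ as a quadratic in $z$, ruling out a splitting via the discriminant (the non-squareness of $\frac{A}{B}y^2-(w-2)$). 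The two descents are of comparable difficulty; yours packages the $z\mapsto z^2$ issue and the degree-$1$ issue into a single quadratic analysis. Finally, you make explicit what the paper leaves implicit: that the $m-1$ factors $\beta-2\cos\frac{j\pi}{m}$ are irreducible, pairwise non-associate, and do not divide $R$ (via $R\equiv S_{m-2}(2\cos\frac{j\pi}{m})(xz-2y)$ modulo each factor), so the component count really is $(m-1)+1$. The only cosmetic omissions are the equally trivial checks that $y\nmid R$ (needed for your descent from the localization at $yz$) and that $R$ is non-constant, both immediate from $R|_{y=0}=-xz\bigl(S_{m-1}(2-z^2)-S_{m-2}(2-z^2)\bigr)\not\equiv 0$ and $R|_{z=0}=y\bigl(S_m(2-y^2)-S_{m-2}(2-y^2)\bigr)\not\equiv 0$.
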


This is equivalent to showing that
$$R(x,y,z) := y(S_{m}(\beta)-S_{m-1}(\beta)) - (xz-y)(S_{m-1}(\beta) - S_{m-2}(\beta))$$ is a non-constant irreducible polynomial in $\BC[x,y,z]$. 

We first prove that $R$ is non-constant. Indeed, we have $R \mid_{z=0} \, = y(S_m(\gamma)-S_{m-2}(\gamma))$ where $\gamma =2-y^2$. It follows that $R \mid_{y= \pm 2, z=0} \, = \pm 4(-1)^m$. Hence $R$ is non-constant. Moreover we have $\gcd(z,R)=1$, since $R \mid_{z=0} \, \not\equiv 0.$ 

We now prove that $R$ is irreducible in $\BC[x,y,z]$. Recall that $\beta = (xz-y)y+2-z^2$ and $R = y(S_{m}(\beta)-S_{m-1}(\beta))-(xz-y)(S_{m-1}(\beta) - S_{m-2}(\beta))$. Note that $x=\frac{(xz-y)+y}{z}$ (for $z \not=0$) and $\gcd(z,R)=1$. 

Hence to prove the irreducibility of $R$ in $\BC[x,y,z]$, we only need to prove that
$$R_1(x_1,y,z):=y(S_{m}(\beta_1)-S_{m-1}(\beta_1))-x_1(S_{m-1}(\beta_1) - S_{m-2}(\beta_1)),$$
where $\beta_1=x_1 y+2-z^2$, is irreducible in $\BC[x_1,y,z]$. 

\medskip

\textbf{Claim 1.} \textit{$R_1$ is irreducible in $\BC[x_1,y,z^2]$.}

\medskip

Since $z^2=x_1y+2-\beta_1$, proving Claim 1 is equivalent to proving that 
$$R_2(x_1,y,\beta_2):=y(S_{m}(\beta_2)-S_{m-1}(\beta_2))-x_1(S_{m-1}(\beta_2) - S_{m-2}(\beta_2)),$$
is irreducible in $\BC[x_1,y,\beta_2]$. 

Since $S_{m}(\beta_2)-S_{m-1}(\beta_2) \not\equiv 0$, $R_2$ has degree 1 in $y$. This, together with the fact that $\gcd(S_{m}(\beta_2)-S_{m-1}(\beta_2),S_{m-1}(\beta_2) - S_{m-2}(\beta_2))=1$, implies that $R_2$ is irreducible in $\BC[x_1,y,\beta_2]$. Claim 1 follows.

\medskip

\textbf{Claim 2.} \textit{$R_1$ is irreducible in $\BC[x_1,y,z]$.}

\medskip

Assume that $R_1$ is reducible in $\BC[x_1,y,z]$. Since $R_1$ is irreducible in $\BC[x_1,y,z^2]$, it must have the form $R_1=(fz+g)(-fz+g)$, for some $f,g \in \BC[x_1,y,z^2] \setminus \{0\}$ satisfying $(f,g)=1$. In particular, $R_1 \mid_{z=0}$ is a perfect square in $\BC[x_1,y]$.  This can not occur, since $R_1 \mid_{x_1=0, z=0} \, =y$ is not a perfect square in $\BC[y]$. Claim 2 follows.

\medskip

This completes the proof of Proposition \ref{n=-1}.

\subsection{The case $m \not \in \{0,1\} $ and $n \not\in \{-1,0\}$} In this case, we will prove the following.

\begin{proposition}
\label{prop.last}
The non-abelian character variety of the $(-2,2m+1,2n)$-pretzel link, where  $m \not \in \{0,1\} $ and $n \not\in \{-1,0\}$, is irreducible.
\end{proposition}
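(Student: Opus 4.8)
The plan is to prove directly that $Q(x,y,z)$ is an irreducible polynomial in $\BC[x,y,z]$. The case $n=1$ is trivial, since then $S_{n-1}=S_{0}=1$, $S_{n-2}=S_{-1}=0$ and $Q=xz-y$; assume $n\neq 1$ from now on. The key first observation is that $\beta=(xz-y)y+2-z^{2}$, so $Q$ is a polynomial in the three quantities $u:=xz-y$, $y$ and $z$ alone; write $Q=\widetilde{Q}(xz-y,y,z)$, where
$$\widetilde{Q}(u,y,z)=uS_{n-1}(\alpha)-\big(S_{m}(\beta)-S_{m-1}(\beta)\big)S_{n-2}(\alpha),\qquad \beta=uy+2-z^{2},\quad \alpha=yS_{m-1}(\beta)-uS_{m-2}(\beta).$$
Since $x=(u+y)/z$, after inverting $z$ the substitution $u=xz-y$ identifies $\BC[x,y,z][z^{-1}]$ with $\BC[u,y,z][z^{-1}]$, carrying $Q$ to $\widetilde{Q}$; hence, provided $z\nmid Q$ in $\BC[x,y,z]$ and $z\nmid\widetilde{Q}$ in $\BC[u,y,z]$, irreducibility of $Q$ in $\BC[x,y,z]$ is equivalent to irreducibility of $\widetilde{Q}$ in $\BC[u,y,z]$. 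Both non-divisibility statements follow from one short computation: at $z=0,\ y=2$ one has $\beta=-2$ and $\alpha=\pm2$, so by Property 1.1, $Q\mid_{z=0,\,y=2}\,=\pm\big((2m-1)n-(2m+1)\big)$, which is nonzero because $(2m-1)n=2m+1$ has no integer solution $n$ when $m\notin\{0,1\}$.

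It remains to prove $\widetilde{Q}$ is irreducible in $\BC[u,y,z]$. Since $z$ occurs in $\widetilde{Q}$ only through $z^{2}$, we may view $\widetilde{Q}$ as an element of $\BC[u,y,z^{2}]\cong\BC[u,y,\beta]$, the isomorphism sending $z^{2}$ to $uy+2-\beta$, and I first show that \emph{$\widetilde{Q}$ is irreducible in $\BC[u,y,\beta]$} (Claim A). Work over $K=\BC(\beta)$: because $m\notin\{0,1\}$, both $S_{m-1}(\beta)$ and $S_{m-2}(\beta)$ are nonzero in $K$, so $(u,y)\mapsto(u,\alpha)$ is an invertible $K$-linear change of coordinates on $K[u,y]$; in these coordinates $\widetilde{Q}=S_{n-1}(\alpha)\,u-\big(S_{m}(\beta)-S_{m-1}(\beta)\big)S_{n-2}(\alpha)$ has degree one in $u$ (as $S_{n-1}(\alpha)\not\equiv0$ for $n\neq0$) with content $\gcd\big(S_{n-1}(\alpha),S_{n-2}(\alpha)\big)=1$ over $K[\alpha]$ by Property 1.3, hence is irreducible in $K[u,y]$. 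By Gauss's lemma it then suffices to check that $\widetilde{Q}$ is primitive over $\BC[\beta]$. A computation of the leading coefficient of $\widetilde{Q}$ in $u$ shows its content over $\BC[\beta]$ divides a product of powers of $S_{m-2}(\beta)$, $S_{m-1}(\beta)$ and $S_{m-1}(\beta)-S_{m-2}(\beta)$, so it is enough to verify $\widetilde{Q}(u,y,\beta_{0})\not\equiv0$ in $\BC[u,y]$ for every root $\beta_{0}$ of $S_{m-2}(\beta)S_{m-1}(\beta)\big(S_{m-1}(\beta)-S_{m-2}(\beta)\big)$. This is a direct check using Properties 1.1 and 1.3: if $S_{m-2}(\beta_{0})=0$ the $u$-coefficient $S_{n-1}\!\big(yS_{m-1}(\beta_{0})\big)$ survives; if $S_{m-1}(\beta_{0})=0$ then $S_{m-2}(\beta_{0})^{2}=1$ and one finds $\widetilde{Q}(u,y,\beta_{0})=\pm S_{n}(u)\neq0$; if $S_{m-1}(\beta_{0})=S_{m-2}(\beta_{0})$ one specializes $y=u$ and uses that $\beta_{0}\neq2$. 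This proves Claim A.

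For the second step (Claim B), note that $\BC[u,y,z]=\BC[u,y,\beta][z]\big/\big(z^{2}-(uy+2-\beta)\big)$ is a quadratic extension of $\BC[u,y,\beta]$ with Galois involution $z\mapsto-z$, and a standard norm argument shows that if the irreducible polynomial $\widetilde{Q}$ factored in $\BC[u,y,z]$ then, up to a nonzero constant, $\widetilde{Q}=g^{2}-(uy+2-\beta)f^{2}$ for some $f,g\in\BC[u,y,\beta]$; setting $z=0$ (equivalently $\beta=uy+2$) this would force $\widetilde{Q}\mid_{z=0}$ to be a perfect square in $\BC[u,y]$. To exclude this, set $u=0$: using $S_{m}(2)-S_{m-1}(2)=1$ we get $\widetilde{Q}\mid_{z=0,\,u=0}\,=-S_{n-2}(my)$, which by Property 1.2 is a nonconstant squarefree polynomial in $y$ for all $n\notin\{-1,0,1,2\}$, hence not a perfect square. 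In the remaining case $n=2$ one has $\widetilde{Q}=u\alpha-\big(S_{m}(\beta)-S_{m-1}(\beta)\big)$, and $\widetilde{Q}\mid_{z=0,\,y=0}\,=-(m-1)u^{2}-1$, which is not a perfect square since $m\neq1$. This establishes Claim B, so $\widetilde{Q}$ — and therefore $Q$ — is irreducible, and the non-abelian character variety of $L$ is irreducible.

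The main obstacle is Claim B. The obvious way to show $\widetilde{Q}\mid_{z=0}$ is not a perfect square — comparing leading coefficients under a generic specialization — fails, because the two summands of $\widetilde{Q}$ have equal degrees and cancel at top order under the natural substitutions; the crucial point is instead to specialize $u=0$ (and $y=0$ when $n=2$), which uses $S_{m}(2)-S_{m-1}(2)=1$ to collapse $\widetilde{Q}\mid_{z=0}$ to a single Chebyshev polynomial whose squarefreeness is provided by Property 1.2.
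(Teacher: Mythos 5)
Your proof is correct and follows essentially the same route as the paper: pass to the variable $u=xz-y$ after checking $z\nmid Q$, prove irreducibility in $\BC[u,y,z^2]\cong\BC[u,y,\beta]$ by exploiting linearity in one variable after the $\alpha$-substitution together with coprimality of consecutive Chebyshev polynomials, and then rule out a factorization $(g+zf)(g-zf)$ by showing the restriction to $z=0$ is not a perfect square via the specializations $u=0$ and, for $n=2$, $y=0$. The remaining differences are cosmetic: you use Gauss's lemma over $\BC(\beta)$ with a primitivity check where the paper uses an explicit $\gcd$ argument with $S_{m-2}(\beta)$, you verify non-vanishing by a self-contained evaluation at $(y,z)=(2,0)$ instead of citing $Q\mid_{z=0}$ from \cite{Tr}, and you (correctly) dispose of the trivial case $n=1$ explicitly.
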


This is equivalent to showing that $Q(x,y,z)$ is a non-constant irreducible polynomial in $\BC[x,y,z]$.

\medskip

We first prove that $Q$ is non-constant. Indeed, assume that $Q(x,y,z)$ is a constant polynomial. By \cite[Prop. 2.5]{Tr}, we have $Q \mid_{z=0} \, =(-1)^{(m-1)(n-1)} S_{2mn-2m-n-2}(y)$. Hence $S_{2mn-2m-n-2}(y)$ is also a constant polynomial. It follows that $2mn-2m-n-2 \in \{-2,-1,0\}$. Since $m \not \in \{0,1\} $ and $n \not\in \{-1,0\}$, we must have $(m,n)=(2,2)$. However, in the case $(m,n)=(2,2)$ we have $Q=-x^2 z^2+x y z^3+x y z-y^2 z^2-z^4+3 z^2-1$. Hence $Q$ is non-constant. Moreover we have $\gcd(z,Q)=1$, since $Q \mid_{z=0} \not\equiv 0$. 

\medskip

We now prove that $Q$ is irreducible in $\BC[x,y,z]$. Recall that $\alpha = yS_{m-1}(\beta)-(xz-y)S_{m-2}(\beta)$, $\beta = (xz-y)y+2-z^2$ and 
$$Q=(xz-y)S_{n-1}(\alpha)-(S_{m}(\beta)-S_{m-1}(\beta))S_{n-2}(\alpha).$$

Note that $x=\frac{(xz-y)+y}{z}$ (for $z \not=0$) and $\gcd(z,Q)=1$. Hence to prove the irreducibility of $Q$ in $\BC[x,y,z]$, we only need to prove that
$$Q_1(x_1,y,z):=x_1 S_{n-1}(\alpha_1)-(S_{m}(\beta_1)-S_{m-1}(\beta_1))S_{n-2}(\alpha_1),$$
where $\alpha_1 = yS_{m-1}(\beta_1)-x_1 S_{m-2}(\beta_1)$ and $\beta_1=x_1 y+2-z^2$, is irreducible in $\BC[x_1,y,z]$. 

\medskip

\textbf{Claim 3.} \textit{$Q_1$ is irreducible in $\BC[x_1,y,z^2]$.}

\medskip

Since $z^2=x_1y+2-\beta_1$, proving Claim 3 is equivalent to proving that 
$$Q_2(x_1,y,\beta_2):=x_1 S_{n-1}(\alpha_2)-(S_{m}(\beta_2)-S_{m-1}(\beta_2))S_{n-2}(\alpha_2),$$
where $\alpha_2 = yS_{m-1}(\beta_2)-x_1 S_{m-2}(\beta_2)$, is irreducible in $\BC[x_1,y,\beta_2]$. 

\medskip

The proof of the irreducibility of $Q_2$ in $\BC[x_1,y,\beta_2]$ is divided into 2 steps.

\medskip

\textit{Step 1.} We first show that $\gcd(S_{m-2}(\beta_2),Q_2)=1$. Indeed, since $m \not= 1$ we have $S_{m-2}(\beta_2) \not\equiv 0$. Suppose that $S_{m-2}(\beta_2)=0$. By Property 1.3, we have $S^2_{m-1}(\beta_2)+S^2_{m-2}(\beta_2)-\beta_2 S_{m-1}(\beta_2)S_{m-2}(\beta_2)=1$. It follows that $S_{m-1}(\beta_2)=\ve$ for some $\ve \in \{\pm 1\}$. Hence $\alpha_2 = \ve y$ and $Q_2  \mid_{S_{m-2}(\beta_2)=0} \, =x_1 S_{n-1}(\ve y)-(\beta_2 -1)\ve S_{n-2}(\ve y)$. Since $n \not= 0$, we have $S_{n-1}(\ve y) \not\equiv 0$. Therefore $Q_2 \mid_{S_{m-2}(\beta_2)=0} \, \not\equiv 0$, which implies that $\gcd(S_{m-2}(\beta_2),Q_2)=1$. 

\medskip

\textit{Step 2.} For $S_{m-2}(\beta_2) \not=0$ we have $x_1=\frac{yS_{m-1}(\beta_2) - \alpha_2}{S_{m-2}(\beta_2)}.$ Since $\gcd(S_{m-2}(\beta_2),Q_2)=1$, to prove the irreducibility of $Q_2$ in $\BC[x_1,y,\beta_2]$ we only need to prove that
$$Q_3(x_2,y,\beta_2):=(yS_{m-1}(\beta_2) - x_2) S_{n-1}(x_2)-S_{m-2}(\beta_2)(S_{m}(\beta_1)-S_{m-1}(\beta_1))S_{n-2}(x_2)$$
is irreducible in $\BC[x_2,y,\beta_2]$. 

Since $mn \not=0$ we have $S_{m-1}(\beta_2)S_{n-1}(x_2) \not\equiv 0$. It follows that $Q_3$ has degree 1 in $y$. We write $Q_3=yf-g$, where $f=S_{m-1}(\beta_2)S_{n-1}(x_2)$ and $g=x_2 S_{n-1}(x_2)+S_{m-2}(\beta_2)(S_{m}(\beta_2)-S_{m-1}(\beta_2)) S_{n-2}(x_2).$ Then $Q_3$ is irreducible in $\BC[x_2, y, \beta_2]$ if $\gcd(f,g)=1$. 

Since $m \not= 1$, we have $S_{m-2}(\beta_2) (S_{m}(\beta_2)-S_{m-1}(\beta_2)) \not\equiv 0$. This, together with the fact that $\gcd(S_{n-1}(x_2),S_{n-2}(x_2))=1$, implies that $\gcd(S_{n-1}(x_2),g)=1.$

By Property 1.3, we have $$S_{m}(\beta_2)S_{m-2}(\beta_2)=S_{m}(\beta_2) (tS_{m-1}(\beta_2)-S_m(\beta_2))=S^2_{m-1}(\beta_2)-1.$$ It follows that $$\gcd(S_{m-1}(\beta_2),g)=\gcd(S_{m-1}(\beta_2),x_2 S_{n-1}(x_2)-S_{n-2}(x_2))=\gcd(S_{m-1}(\beta_2),S_{n}(x_2)).$$
Since $n \not= -1$, we have $S_{n}(x_2) \not\equiv 0$ and hence $\gcd(S_{m-1}(\beta_2),g)=1$. Therefore $\gcd(f,g)=1$, which implies the irreducibility of $Q_3$ in $\BC[x_2, y, \beta_2]$. Claim 3 follows.

\medskip

\textbf{Claim 4.} \textit{$Q_1$ is irreducible in $\BC[x_1,y,z]$.}

\medskip

Assume that $Q_1$ is reducible in $\BC[x_1,y,z]$. Since $Q_1$ is irreducible in $\BC[x_1,y,z^2]$, it must have the form $Q_1=(fz+g)(-fz+g)$, for some $f,g \in \BC[x_1,y,z^2] \setminus \{0\}$ satisfying $(f,g)=1$. In particular, $Q_1 \mid_{z=0}$ is a perfect square in $\BC[x_1,y]$. 

Since $m \not=0$ and $n \not\in \{0,1\}$, $Q_1 \mid_{x_1=0, z=0} \, =-S_{n-2}(my)$ is not a perfect square in $\BC[y]$ unless $n=2$. In the case $n=2$ we have $Q_1 \mid_{y=0, z=0}=(1-m)x_1^2-1$ is not a perfect square in $\BC[x_1]$, since $m \not= 1$. This shows that $Q_1 \mid_{z=0}$ can not be a perfect square in $\BC[x_1,y]$. Claim 4 follows.

\medskip

This completes the proof of Proposition \ref{prop.last}.

\subsection{Proof of Theorem 1(ii)} If $(m,n) \not= (0,-1)$ then Theorem 1(ii) follows from Propositions \ref{prop.first}--\ref{prop.last}. If $(m,n)=(0,-1)$ then $L$ is the $(-2,1,-2)$-pretzel link, which is the two-component unlink. Its link group is $\BZ^2$ and hence its character variety is $\BC^3$ by the  Fricke-Klein-Vogt theorem, see \cite{LM}. This completes the proof of Theorem 1(ii).

\section{Two-bridge links}

We first review character varieties of two-bridge links. Let $L=\fb(2p,m)$ be the two-bridge link associated to a pair of relatively prime integers $p>m >0$, where $m$ is odd (see \cite{BZ}). The link group of $L$ is $\pi_L=\la a,b \mid aw=wa\ra$, where $w=b^{\varepsilon_1}a^{\varepsilon_2} \cdots a^{\varepsilon_{2p-2}}b^{\varepsilon_{2p-1}}$ and $\varepsilon_j=(-1)^{\lfloor \frac{mj}{2p} \rfloor}.$ Here $a$ and $b$ are 2 meridians of $L$.

Let $F_{a,b}:=\la a,b \ra$ be the free group in 2 letters $a$ and $b$. The character variety of $F_{a,b}$ is isomorphic to $\BC^3$ by the Fricke-Klein-Vogt theorem. For every word $u$ in $F_{a,b}$ there is a \emph{unique} polynomial $P_u$ in 3 variables such that for any representation $\rho: F_{a,b} \to SL_2(\BC)$ one has $\tr (\rho(u))=P_u (x,y,z)$ where $x:=\tr(\rho(a)),~y:=\tr(\rho(b))$ and $z:=\tr(\rho(ab))$. By \cite[Prop. 1.4.1]{CS}, the polynomial $P_u$ can be calculated inductively using the following identity for traces of matrices $A,B \in SL_2(\BC)$:
\begin{equation}
\label{trace}
\tr (AB)+\tr(AB^{-1})=\tr(A) \tr(B).
\end{equation}

Suppose $G$ be a group generated by 2 elements $a$ and $b$. For every representation $\rho: G \to SL_2(\BC)$, we consider $x,y,$ and $z$ as functions of $\rho$. By abuse of notation, we will identify $u \in G$ with its image $\rho(u) \in SL_2(\BC)$.

By \cite{LT}, the character ring of $\pi_L=\la a,b \mid aw=wa\ra$ is the quotient of the polynomial ring $\BC[x,y,z]$ by the principal ideal generated by the polynomial $P_{awa^{-1}b^{-1}}-P_{wb^{-1}}$. It follows that the character variety of $L$ is the zero set of $P_{awa^{-1}b^{-1}}-P_{wb^{-1}}$. 

The following lemma will be frequently used in the proofs of Theorems 2 and 3.

\begin{lemma} 
\label{recurrence}
Suppose $M \in SL_2(\BC)$. Then, for all integers $k$, 
\begin{equation}
\label{matrix}
M^k = S_{k}(\tr M)I - S_{k-1}(\tr M)M^{-1}.
\end{equation}
\end{lemma}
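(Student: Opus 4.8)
The plan is to prove the identity $M^k = S_k(\tr M)\,I - S_{k-1}(\tr M)\,M^{-1}$ by induction on $k$, using the Cayley–Hamilton theorem as the base mechanism and the Chebyshev recursion $S_{k+1}(t)=tS_k(t)-S_{k-1}(t)$ to propagate it. First I would fix $M\in SL_2(\BC)$ and write $t=\tr M$. The Cayley–Hamilton theorem for a $2\times 2$ matrix of determinant $1$ gives $M^2 - tM + I = 0$, i.e. $M^2 = tM - I$; multiplying by $M^{-1}$ this also reads $M = tI - M^{-1}$, which is exactly the claimed formula for $k=1$ since $S_1(t)=t$ and $S_0(t)=1$. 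The case $k=0$ is the trivial identity $M^0 = I = S_0(t)I - S_{-1}(t)M^{-1}$, using $S_{-1}(t)=0$ (which follows from Property 1.2, $S_{-1}=-S_{-1}$, or directly from the recursion with $S_0=1$, $S_1=t$).

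Next I would run the induction upward. Assuming $M^k = S_k(t)I - S_{k-1}(t)M^{-1}$, multiply both sides on the left by $M$ and use $M\cdot M^{-1}=I$ together with $M = tI - M^{-1}$ to rewrite $M\cdot I = tI - M^{-1}$; collecting terms gives $M^{k+1} = \big(tS_k(t)-S_{k-1}(t)\big)I - S_k(t)M^{-1} = S_{k+1}(t)I - S_k(t)M^{-1}$, which is the formula for $k+1$. This handles all $k\ge 0$. For negative $k$ I would argue symmetrically: either run the analogous downward induction using $M^{-1} = tI - M$ (again Cayley–Hamilton), or simply note that $M^{-1}$ has the same trace $t$ as $M$ and apply the already-proved nonnegative case to $M^{-1}$, then use Property 1.2 ($S_{-k}(t)=-S_{k-2}(t)$) to reconcile indices. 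Either route closes the remaining cases, completing the proof.

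The computations here are entirely routine — each inductive step is a one-line matrix manipulation — so there is no serious obstacle; the only point requiring a moment of care is bookkeeping the index conventions at $k=0$ and for negative $k$, making sure the values $S_{-1}(t)=0$ and the reflection formula from Property 1.2 are invoked consistently. I would present the nonnegative case in full and dispatch the negative case in a single sentence by the substitution $M\mapsto M^{-1}$ trick, since spelling out a second induction adds nothing.
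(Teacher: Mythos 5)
Your proposal is correct and follows essentially the same route as the paper: Cayley--Hamilton gives $M=(\tr M)I-M^{-1}$, an upward induction handles $k\ge 0$, and the negative case is reduced to the nonnegative one by applying the identity to $M^{-1}$ (which has the same trace) and invoking $S_{-k}(t)=-S_{k-2}(t)$ from Property 1.2. Nothing is missing.
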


\begin{proof}
For $M \in SL_2(\BC)$, the Caley-Hamilton theorem implies that $M=(\tr M)I-M^{-1}$, where $I$ denotes the identity matrix in $SL_2(\BC)$. By induction on $k$, we can show that Eq. \eqref{matrix} holds true for all $M \in SL_2(\BC)$ and $k \ge 0$.

For $k<0$, by applying Eq. \eqref{matrix} for $M^{-1} \in SL_2(\BC)$ and $-k>0$ we have 
\begin{eqnarray*}
(M^{-1})^{-k} &=& S_{-k}(\tr M^{-1})I - S_{-k-1}(\tr M^{-1})M \\
              &=& -S_{k-2}(\tr M)I + S_{k-1}(\tr M) \left( (\tr M)I-M^{-1} \right)\\
              &=& S_{k}(\tr M)I - S_{k-1}(\tr M)M^{-1}.
\end{eqnarray*}
The lemma follows.
\end{proof}

We will also need the following.

\begin{lemma}
\label{gamma}
One has $P_{aba^{-1}b^{-1}}=x^2+y^2+z^2-xyz-2$.
\end{lemma}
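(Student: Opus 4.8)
The plan is to compute $P_{aba^{-1}b^{-1}}$ directly using the trace identity \eqref{trace} together with the standard coordinates $x=\tr a$, $y=\tr b$, $z=\tr(ab)$. First I would set $A=ab$ and $B=b$, so that $aba^{-1}b^{-1}=(ab)b^{-1}a^{-1}b^{-1}$; more economically, write $aba^{-1}b^{-1}=(ab)(ba)^{-1}$, so its trace is $\tr\big((ab)(ba)^{-1}\big)$. Applying \eqref{trace} with the two matrices $ab$ and $ba$ gives
\begin{equation*}
\tr\big((ab)(ba)^{-1}\big)=\tr(ab)\tr(ba)-\tr\big((ab)(ba)\big)=z^2-\tr(abba).
\end{equation*}
So it remains to evaluate $\tr(ab^2a)=\tr(a^2b^2)$ (cyclic invariance), and the claim reduces to showing $\tr(a^2b^2)=xyz-x^2-y^2+2$, equivalently $z^2-\tr(a^2b^2)=x^2+y^2+z^2-xyz-2$.

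For the remaining trace I would peel off powers using Lemma \ref{recurrence} (the $k=2$ case of \eqref{matrix}), or equivalently just iterate \eqref{trace}. Using $a^2=xa-I$ and $b^2=yb-I$ from Cayley–Hamilton, expand
\begin{equation*}
\tr(a^2b^2)=\tr\big((xa-I)(yb-I)\big)=xy\,\tr(ab)-x\,\tr a-y\,\tr b+\tr I=xyz-x^2-y^2+2,
\end{equation*}
which is exactly what is needed. Substituting back, $P_{aba^{-1}b^{-1}}=z^2-(xyz-x^2-y^2+2)=x^2+y^2+z^2-xyz-2$, as claimed. Alternatively, one can avoid the $(ba)^{-1}$ step and instead write $\tr(aba^{-1}b^{-1})$ via two applications of \eqref{trace}: first $\tr(aba^{-1}\cdot b^{-1})=\tr(aba^{-1})\tr(b)-\tr(aba^{-1}b)$, note $\tr(aba^{-1})=\tr b=y$, then handle $\tr(aba^{-1}b)$ similarly; this routes through the same intermediate traces.

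There is essentially no obstacle here — this is a standard identity (the left-hand side is the well-known $SL_2$ commutator trace, closely related to $\kappa=x^2+y^2+z^2-xyz-4=\tr(aba^{-1}b^{-1})-2$), and the only thing to be careful about is bookkeeping signs and the correct normalization ($P_{aba^{-1}b^{-1}}$ versus $P_{aba^{-1}b^{-1}}-2$); the $-2$ rather than $-4$ is correct because we are computing the trace of the commutator, not the trace minus $2$. I would present the two-line computation above as the proof, citing \eqref{trace} and Cayley–Hamilton (or Lemma \ref{recurrence}).
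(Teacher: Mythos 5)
Your proof is correct and follows essentially the same route as the paper: both write $aba^{-1}b^{-1}=(ab)(ba)^{-1}$, apply the trace identity \eqref{trace} to reduce to $z^2-\tr(ab^2a)$, and then evaluate that remaining trace (the paper by iterating \eqref{trace}, you by cyclicity plus Cayley--Hamilton, which is an equivalent computation). The stray first decomposition you mention ($aba^{-1}b^{-1}=(ab)b^{-1}a^{-1}b^{-1}$) is incorrect as written, but since you discard it immediately it does not affect the argument.
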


\begin{proof}
By applying Eq. \eqref{trace}, we have $P_{(ab)(ba)^{-1}} = P_{ab} P_{ba}-P_{ab^2a} = z^2-(P_{ab^2}P_a-P_{b^2})$. 

Similarly, $P_{ab^2}=P_{ab}P_b-P_{a}=yz-x$ and $P_{b^2}=y^2-2$. Hence
$P_{(ab)(ba)^{-1}}=z^2+P_{b^2}-xP_{ab^2}=x^2+y^2+z^2-xyz-2.$
\end{proof}

We now prove Theorems 2 and 3. Let $\gamma=P_{aba^{-1}b^{-1}}$. Then, by Lemma \ref{gamma}, we have $\gamma=x^2+y^2+z^2-xyz-2$.

\subsection{Two-bridge links $\fb(2p,3)$} The link group of $L=\fb(2p,3)$ is $\pi_L=\la a,b \mid aw=wa\ra$ where $w=b^{\varepsilon_1}a^{\varepsilon_2} \cdots a^{\varepsilon_{2p-2}}b^{\varepsilon_{2p-1}}$, and $\ve_j=-1$ if $\lfloor \frac{2p}{3} \rfloor +1 \le j \le \lfloor \frac{4p}{3} \rfloor$ and $\ve_j=1$ otherwise.

The proof of Theorem 2 for the two-bridge link $\fb(6n+4,3)$ will be similar to that for $\fb(6n+2,3)$. Hence, without loss of generality we can assume that $L=\fb(6n+2,3)$. In this case, we have $p=3n+1$ and $w=(ba)^{n}(b^{-1}a^{-1})^{n}b^{-1}(ab)^{n}.$ 

We first calculate the character variety of $L$. Recall that $x=P_a$, $y=P_b$ and $z=P_{ab}$. In $SL_2(\BC)$, by applying Eq. \eqref{matrix} we have
\begin{eqnarray*}
a^{-1}wab^{-1} &=& a^{-1}(ba)^{n}(b^{-1}a^{-1})^{n}b^{-1}(ab)^{n}ab^{-1}\\
               &=& a^{-1} \left( S_{n}(z)I-S_{n-1}(z)(ba)^{-1} \right) \left( S_{n}(z)I-S_{n-1}(z)(b^{-1}a^{-1})^{-1} \right) \\
               && b^{-1} \left( S_{n}(z)I-S_{n-1}(z)(ab)^{-1} \right) ab^{-1}\\
                   &=& a^{-1}b^{-1}ab^{-1} S^3_{n}(z) - (a^{-2}b^{-2}ab^{-1} +  ab^{-1} + a^{-1}b^{-3} ) S^2_{n}(z)S_{n-1}(z) \\
                     && + ( a^{-2}b^{-1}a^2b^{-1} + a^{-2}b^{-4}  + b^{-2} ) S_{n}(z)S^2_{n-1}(z) - a^{-2} b^{-1} ab^{-2} S^3_{n-1}(z).
\end{eqnarray*}
Similarly,
\begin{eqnarray*}
wb^{-1} &=& (ba)^{n}(b^{-1}a^{-1})^{n}b^{-1}(ab)^{n}b^{-1}\\
                   &=& b^{-2} S^3_{n}(z) - (a^{-1}b^{-3} + ab^{-1} + b^{-2}a^{-1}b^{-1} ) S^2_{n}(z)S_{n-1}(z) \\
                     && + ( a^{-1}b^{-1}ab^{-1} + a^{-1}b^{-3}a^{-1}b^{-1} + ab^{-1}a^{-1}b^{-1} ) S_{n}(z)S^2_{n-1}(z) \\
                     && - a^{-1} b^{-1} ab^{-1}a^{-1}b^{-1} S^3_{n-1}(z).
\end{eqnarray*}

Hence
\begin{eqnarray}
\label{matrix-simplify}
                     && a^{-1}wab^{-1} - wb^{-1} \\
                     &=& (a^{-1}b^{-1}ab^{-1} - b^{-2}) S^3_{n}(z) - (a^{-2}b^{-2}ab^{-1} - b^{-2}a^{-1}b^{-1}) S^2_{n}(z)S_{n-1}(z) \nonumber\\
                     && + ( a^{-2}b^{-1}a^2b^{-1} + a^{-2}b^{-4}  + b^{-2} - a^{-1}b^{-1}ab^{-1} - a^{-1}b^{-3}a^{-1}b^{-1} - ab^{-1}a^{-1}b^{-1}) \nonumber \\ && S_{n}(z)S^2_{n-1}(z) - (a^{-2} b^{-1} ab^{-2} - a^{-1} b^{-1} ab^{-1}a^{-1}b^{-1}) S^3_{n-1}(z). \nonumber
\end{eqnarray}

By applying the algorithm for calculating $P_u$, for any word $u$ in 2 letters $a$ and $b$, described in \cite[prop. 1.4.1]{CS}, we can prove the following. 

\begin{lemma} 
\label{trace-simplify}
One has
\begin{eqnarray*}
P_{a^{-1}b^{-1}ab^{-1}} - P_{b^{-2}} &=& 2-\gamma,\\
P_{a^{-2}b^{-2}ab^{-1}} - P_{b^{-2}a^{-1}b^{-1}} &=& (2-\gamma)xy,\\
P_{a^{-2}b^{-1}a^2b^{-1}} - P_{a^{-1}b^{-1}ab^{-1}} &=& (2-\gamma)(x^2-1),\\
P_{a^{-2}b^{-4}} - P_{a^{-1}b^{-3}a^{-1}b^{-1}} &=& (2-\gamma)(y^2-1),\\
P_{b^{-2} } - P_{ab^{-1}a^{-1}b^{-1}} &=& \gamma-2,\\
P_{a^{-2} b^{-1} ab^{-2}} - P_{a^{-1} b^{-1} ab^{-1}a^{-1}b^{-1}} &=& (2-\gamma) (xy-z).
\end{eqnarray*}
\end{lemma}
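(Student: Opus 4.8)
The plan is to compute each polynomial $P_u$ occurring on the left-hand sides explicitly, by the trace–reduction algorithm of \cite[Prop.~1.4.1]{CS} — repeated use of Eq.~\eqref{trace}, cyclic and inversion invariance of the trace, and the Cayley–Hamilton identity $A^{-2}=(\tr A)A^{-1}-I$ (the case $k=2$ of Lemma~\ref{recurrence}) — and then to subtract within each pair. The only input that is not purely elementary is the value $\tr(a^{-1}b^{-1}ab)=\gamma$ (and, by the same token, $\tr(ab^{-1}a^{-1}b)=\gamma$): this follows from Lemma~\ref{gamma} applied to the generating pair $(a^{-1},b^{-1})$ of $F_{a,b}$, since $\tr(a^{-1})=x$, $\tr(b^{-1})=y$ and $\tr(a^{-1}b^{-1})=\tr(ab)=z$. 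These inner commutators are the only subwords at which the reduction does not terminate inside $\BC[x,y]$.

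Before grinding out the six reductions I would record an observation that both predicts the shape of the answers and guards against arithmetic slips. In each pair $(u,v)$ the words $u$ and $v$ have equal exponent sums in $a$ and equal exponent sums in $b$ — for instance $a^{-1}b^{-1}ab^{-1}$ and $b^{-2}$ both have exponent vector $(0,-2)$, while $a^{-2}b^{-1}ab^{-2}$ and $a^{-1}b^{-1}ab^{-1}a^{-1}b^{-1}$ both have $(-1,-3)$ — so $uv^{-1}\in[F_{a,b},F_{a,b}]$. Hence for every reducible representation $\rho\colon F_{a,b}\to SL_2(\BC)$, which we may take upper triangular, $\rho(uv^{-1})$ is unipotent upper triangular and therefore $\tr\rho(u)=\tr\rho(v)$. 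Since the trace coordinates of reducible representations fill a Zariski-dense subset of the Cayley cubic $\{x^2+y^2+z^2-xyz-4=0\}=\{\gamma=2\}$, which is irreducible, $P_u-P_v$ vanishes on that cubic and is therefore divisible by $\gamma-2$ in $\BC[x,y,z]$. Thus each identity is forced into the shape $P_u-P_v=(2-\gamma)R_{uv}$ with $R_{uv}\in\BC[x,y,z]$, and it remains only to identify the quotients, which the statement asserts to be $1,\ xy,\ x^2-1,\ y^2-1,\ -1,\ xy-z$ respectively.

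For identities (1) and (5) the left-hand side has degree at most $3$, so $R_{uv}$ is a constant read off after a two-line reduction: e.g.\ $P_{a^{-1}b^{-1}ab^{-1}}=\tr\big((a^{-1}b^{-1}a)\,b^{-1}\big)=y\cdot y-\tr(a^{-1}b^{-1}ab)=y^2-\gamma$, so the difference with $P_{b^{-2}}=y^2-2$ is $2-\gamma$. For identities (2), (3), (4) and (6) the quotient has degree at most $2$; I would carry out the (now short) full reductions of the two words in each pair — each word collapses once the inner commutator subword is replaced by $\gamma$ and Cayley–Hamilton is used to eliminate the powers $a^{\pm2},b^{\pm2},b^{-3}$ — or, to minimize the algebra, determine the degree-$\le 2$ polynomial $R_{uv}$ by evaluating both traces on a convenient slice such as $z=0$ and invoking the uniqueness supplied by the divisibility step.

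The main obstacle is not conceptual but clerical: the four non-constant identities demand a modest dose of careful trace arithmetic, where it is easy to conflate, say, $\tr(a^{-1}b^{-1})=z$ with $\tr(a^{-1}b)=xy-z$. The divisibility-by-$(\gamma-2)$ remark is precisely what keeps the computation short and self-correcting. The only step that warrants a sentence of justification is the claim that the reducible characters are Zariski dense in the (irreducible) Cayley cubic, and that is standard.
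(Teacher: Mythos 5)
Your proposal is correct and follows essentially the same route as the paper, whose entire proof is the sentence ``by applying the algorithm for calculating $P_u$ described in \cite[Prop.~1.4.1]{CS}'': your worked instance $P_{a^{-1}b^{-1}ab^{-1}}=y^2-\gamma$ is right, and the other five identities reduce the same way once the inner commutators are replaced by $\gamma$ and Cayley--Hamilton eliminates the higher powers. The a priori divisibility of each $P_u-P_v$ by $\gamma-2$ (from the density of reducible characters in the irreducible cubic $\{\gamma=2\}$) is an addition not present in the paper and a genuinely useful safeguard. One caution: the optional shortcut of reading off $R_{uv}$ from the single slice $z=0$ cannot determine $R_{uv}$ when it actually involves $z$ --- the sixth quotient is $xy-z$ --- so you would need further slices or $z$-derivatives there; your primary plan of carrying out the full reductions is the one to rely on.
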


Eq. \eqref{matrix-simplify} and Lemma \ref{trace-simplify} imply that
\begin{eqnarray*}
P_{a^{-1}wab^{-1}}-P_{wb^{-1}} &=& (2-\gamma) \big[  (x^2+y^2)S_{n}(z)S^2_{n-1}(z)-xyS_{n-1}(z)(S^2_{n}(z)+S^2_{n-1}(z)) \\
&& + \, S^3_{n}(z)-3S_{n}(z)S^2_{n-1}(z)+zS^3_{n-1}(z) \big].
\end{eqnarray*}
By Property 1.4, we have $S^3_{n}(z)-3S_{n}(z)S^2_{n-1}(z)+zS^3_{n-1}(z)=S_{3n}(z).$ Hence $P_{a^{-1}wab^{-1}}-P_{wb^{-1}}=(2-\gamma)Q(x,y,z)$, where
$$Q(x,y,z)=(x^2+y^2)S_{n}(z)S^2_{n-1}(z)-xyS_{n-1}(z)(S^2_{n}(z)+S^2_{n-1}(z))+S_{3n}(z).$$

This proves that character variety of $\fb(6n+2,3)$ is the zero set of the polynomial $(x^2+y^2+z^2-xyz-4)Q(x,y,z)$. To complete the proof of Theorem 2 for $\fb(6n+2,3)$, we only need to show the following.

\begin{proposition}
\label{m3}
$Q(x,y,z)$ is irreducible in $\BC[x,y,z]$.
\end{proposition}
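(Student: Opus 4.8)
The plan is to treat $Q$ as a polynomial in the variable $x$ (equivalently $y$, by the evident $x \leftrightarrow y$ symmetry) and exploit the coprimality of Chebyshev polynomials from Property 1.3. Writing $Q = S_n(z)S_{n-1}^2(z)\,x^2 - S_{n-1}(z)(S_n^2(z)+S_{n-1}^2(z))\,xy + \big(S_n(z)S_{n-1}^2(z)\,y^2 + S_{3n}(z)\big)$, I first record that $S_n(z)S_{n-1}^2(z) \not\equiv 0$ (for $n \ge 1$), so $Q$ genuinely has degree $2$ in $x$. A degree-$2$ polynomial in $x$ over the UFD $\BC[y,z]$ is reducible only if either it has a root in the fraction field $\BC(y,z)$, i.e. its discriminant is a perfect square in $\BC[y,z]$, or it shares a nontrivial factor with its leading coefficient $S_n(z)S_{n-1}^2(z)$; I would rule out both.

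First I would compute the discriminant $\Delta$ of $Q$ with respect to $x$. Using Property 1.3 in the form $S_n^2(z)+S_{n-1}^2(z) = zS_n(z)S_{n-1}(z)+1$ and $S_{3n}(z) = S_n^3(z) - 3S_n(z)S_{n-1}^2(z) + zS_{n-1}^3(z)$ (Property 1.4), the expression should collapse: I expect $\Delta$ to factor as $S_{n-1}^2(z)$ times something of the shape $\big(S_n^2(z)+S_{n-1}^2(z)\big)^2 y^2 - 4S_n(z)S_{n-1}(z)\big(S_n(z)S_{n-1}^2(z)y^2 + S_{3n}(z)\big)$, and after substituting the identities the $y^2$-coefficient becomes $(zS_nS_{n-1}+1)^2 - 4zS_n^2S_{n-1}^2 = 1 - 2zS_nS_{n-1} + z^2S_n^2S_{n-1}^2 = (zS_nS_{n-1}-1)^2 = (S_n^2+S_{n-1}^2-2)^2$... actually I would rather carry this out carefully, but the point is that $\Delta/S_{n-1}^2(z)$ should be a quadratic in $y$ over $\BC[z]$ whose own discriminant (in $y$) is, up to a unit, a power of $S_n(z)$ or a product of distinct irreducible factors, hence not a perfect square. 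To make this rigorous with minimal computation I would instead evaluate $Q$ at convenient specializations: for instance $Q\mid_{z=2}$ gives, via $S_k(2)=k+1$, the polynomial $(n+1)n^2 x^2 - (n+1)\big((n+1)^2+n^2\big)\,n\, xy + (n+1)n^2 y^2 + (3n+1)$, whose $x$-discriminant is $n^2(n+1)^2\big[\,(2n^2+2n+1)^2 - 4n^2(n+1)\,\big]y^2 - 4(n+1)n^2(3n+1)$; the bracket equals $(2n^2+2n+1)^2 - 4n^2(n+1) = 4n^4 + \cdots$ which one checks is not a perfect square as an integer polynomial unless it is constant, and the constant term $-4(n+1)n^2(3n+1) \ne 0$ prevents $\Delta\mid_{z=2}$ from being a square in $\BC[y]$. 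Since a square in $\BC[y,z]$ restricts to a square in $\BC[y]$, this shows $\Delta$ is not a square and $Q$ has no linear factor in $x$ over $\BC(y,z)$.

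It remains to exclude a factorization of $Q$ into two linear factors over $\BC(y,z)$ that fails to be visible over $\BC[y,z]$ only because of the leading coefficient — i.e. I must check $\gcd\big(Q, S_n(z)S_{n-1}^2(z)\big) = 1$ in $\BC[x,y,z]$. Any common factor would be a factor of $S_n(z)$ or $S_{n-1}(z)$, hence of the form $(z-c)$ with $S_n(c)=0$ or $S_{n-1}(c)=0$; but if $S_n(c)=0$ then $Q\mid_{z=c} = -c\,S_{n-1}(c)\cdot S_{n-1}^2(c)\,xy + S_{3n}(c)$ — wait, more simply $Q\mid_{z=c}$ has $x^2$-coefficient zero but retains the $xy$-term $-S_{n-1}(c)(S_n^2(c)+S_{n-1}^2(c))xy = -S_{n-1}(c)^3 xy \ne 0$ since $S_{n-1}(c) \ne 0$ by Property 1.3, so $(z-c) \nmid Q$; the case $S_{n-1}(c)=0$ is analogous, giving $Q\mid_{z=c} = S_n(c)x^2\cdot 0 \cdots$ no, $S_{n-1}(c)=0$ forces $S_n(c)^2 = 1$ and $S_{3n}(c) = S_n(c)^3 = S_n(c)$, leaving $Q\mid_{z=c}$ a nonzero multiple of $x^2$ plus a nonzero constant, again not divisible by anything. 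Hence the leading coefficient is coprime to $Q$, so a factorization over $\BC(y,z)$ would descend to $\BC[y,z]$ by Gauss's lemma, contradicting the non-square discriminant. Therefore $Q$ is irreducible in $\BC[x,y,z]$, which together with the earlier computation proves Theorem 2 for $\fb(6n+2,3)$.

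I expect the main obstacle to be the discriminant computation: getting $\Delta$ into a form where non-squareness is manifest requires combining Properties 1.3 and 1.4 in exactly the right order, and a clean closed form may not drop out, so the fallback of checking non-squareness after the specialization $z=2$ (or $z=-2$, or $z=0$) is the safety net — one only needs a \emph{single} specialization at which the discriminant fails to be a perfect square, and the freedom to choose among $z \in \{2,-2,0\}$ and among the variables $x,y$ makes this robust for all $n \ge 1$. One should separately note the trivial case $n=0$ (where $p=1$ and $\fb(2,3)$ is not among the hyperbolic links under consideration, or where $Q = S_0(z)\cdot 0 \cdots$ degenerates) is excluded by hypothesis, and likewise double-check small $n$ like $n=1$ directly.
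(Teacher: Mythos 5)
Your argument is correct and reaches the conclusion by a genuinely different route from the paper. The paper substitutes $(x,y)\mapsto(x+y,x-y)$ to diagonalize the quadratic part, rewrites $Q$ as $y^2S_{n-1}(z)(S_n(z)+S_{n-1}(z))^2-x^2S_{n-1}(z)(S_n(z)-S_{n-1}(z))^2+S_{3n}(z)$, deduces irreducibility over $\BC[x,y^2,z]$ from linearity in $y^2$ together with $\gcd(S_{n-1},S_{3n})=1$, and then excludes a factorization of the form $(yf+g)(yf-g)$ because $f^2=S_{n-1}(z)(S_n(z)+S_{n-1}(z))^2$ would force $S_{n-1}$ to be a perfect square (with $n=1$ handled separately). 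You instead view $Q$ as a quadratic in $x$ over $\BC[y,z]$ and split reducibility into (a) nontrivial content, which you exclude by checking $Q\mid_{z=c}\not\equiv 0$ at every root $c$ of the leading coefficient $S_n(z)S^2_{n-1}(z)$ via Property 1.3, and (b) a square discriminant; your dichotomy is valid since a factor of $x$-degree $0$ must divide the content, hence the leading coefficient, while a factorization into two $x$-linear pieces forces the discriminant to be a square in $\BC(y,z)$ and therefore in the UFD $\BC[y,z]$. The two proofs are cousins (the paper's change of variables is essentially completing the square), but your discriminant actually collapses more cleanly than you anticipate: $\Delta=S^2_{n-1}(z)\big[(S^2_n(z)-S^2_{n-1}(z))^2y^2-4S_n(z)S_{3n}(z)\big]$, and a polynomial of the form $(\text{nonzero square in }\BC[z])\cdot y^2-(\text{nonzero element of }\BC[z])$ is never a square in $\BC[y,z]$ (the cross term forces the constant term of the would-be square root to vanish), so no specialization and no exceptional case $n=1$ is needed. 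Two small slips to repair, neither affecting the outcome: in your $z=2$ specialization the $xy$-coefficient is $-n\big((n+1)^2+n^2\big)$, without the extra factor $(n+1)$, and the specialized discriminant still has the nonzero constant term $-4n^2(n+1)(3n+1)$; and when $S_{n-1}(c)=0$ the restriction $Q\mid_{z=c}$ is simply the nonzero constant $S_{3n}(c)=S^3_n(c)=\pm 1$ (the $x^2$-term vanishes too), which still yields $(z-c)\nmid Q$.
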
 

\begin{proof}
Let $Q'(x,y,z)$ be the polynomial in $\BC[x,y,z]$ defined by $Q'(x,y,z)=Q'(x+y,x-y,z)$. To prove Proposition \ref{m3}, we only need to prove that $Q'$ is irreducible in $\BC[x,y,z]$.

We have
\begin{eqnarray*}
Q' &=& 2(x^2+y^2)S_{n}(z)S^2_{n-1}(z)-(x^2-y^2)S_{n-1}(z)(S^2_{n}(z)+S^2_{n-1}(z))+S_{3n}(z)\\
         &=& y^2S_{n-1}(z) (S_n(z)+S_{n-1}(z))^2-x^2S_{n-1}(z) (S_n(z)-S_{n-1}(z))^2+S_{3n}(z).
\end{eqnarray*}
Since $S_{n-1}(z) (S_n(z)+S_{n-1}(z))^2 \not\equiv 0$ and $\gcd(S_{n-1}(z),S_{3n}(z))=1$ in $\BC[z]$, we have that $Q'$ is irreducible in $\BC[x,y^2]$. Moreover, since $Q' \mid_{x=0,y=0} \, =S_{3n}(z) \not\equiv 0$, $y$ is not a factor of $Q'$ in $\BC[x,y]$.

Assume that $Q'$ is reducible in $\BC[x,y,z]$. Then it must have the form $Q'=(yf+g)(yf-g)$, for some $f,g \in \BC[x,z] \setminus \{0\}$ satisfying $\gcd(f,g)=1$. In particular, we have $f^2=S_{n-1}(z) (S_n(z)+S_{n-1}(z))^2$. This can not occur unless $n=1$. In the case $n=1$, we have $Q'=y^2(z+1)^2-x^2(z-1)^2+z^3-2z$. Hence  $g^2=x^2(z-1)^2-(z^3-2z)$, which can not occur. The proposition follows.
\end{proof}

This complete the proof of Theorem 2 for the two-bridge link $\fb(6n+2,3)$. The proof of Theorem 2 for $\fb(6n+4,3)$ is similar.

\subsection{Twisted Whitehead links} The proof of Theorem 3 for the twisted Whitehead link $W_{2n}=\fb(8n+4,4n+1)$ will be similar to that for $W_{2n-1}=\fb(8n,4n-1)$. Hence, without loss of generality we can assume that $L=\fb(8n,4n-1)$. In this case, we have $\pi_L=\la a,b \mid wa=aw\ra$, where $w=(bab^{-1}a^{-1})^na(a^{-1}b^{-1}ab)^n$. 

We first calculate the character variety of $L$. Recall that $\gamma=P_{aba^{-1}b^{-1}}=x^2+y^2+z^2-xyz-2$. In $SL_2(\BC)$, by applying Eq. \eqref{matrix} we have
\begin{eqnarray*}
awa^{-1}b^{-1} &=& a(bab^{-1}a^{-1})^na(a^{-1}b^{-1}ab)^na^{-1}b^{-1} \\
               &=& S^2_{n-1}(\gamma)abab^{-1}a^{-1}b^{-1}aba^{-1}b^{-1}+S^2_{n-2}(\gamma)ab^{-1}\\
               &&-S_{n-1}(\gamma)S_{n-2}(\gamma)(abab^{-1}a^{-1}b^{-1}+ab^{-1}aba^{-1}b^{-1}).
\end{eqnarray*}
and
\begin{eqnarray*}
wb^{-1} &=& (bab^{-1}a^{-1})^na(a^{-1}b^{-1}ab)^nb^{-1} \\ 
        &=& S^2_{n-1}(\gamma)bab^{-1}a^{-1}b^{-1}a+S^2_{n-2}(\gamma)ab^{-1}-S_{n-1}(\gamma)S_{n-2}(\gamma)(bab^{-2}+b^{-1}a).
\end{eqnarray*}
Hence
\begin{eqnarray}
\label{W}
awa^{-1}b^{-1} - wb^{-1} &=& S^2_{n-1}(\gamma)(abab^{-1}a^{-1}b^{-1}aba^{-1}b^{-1}-bab^{-1}a^{-1}b^{-1}a) \nonumber\\
&&- \, S_{n-1}(\gamma)S_{n-2}(\gamma)(abab^{-1}a^{-1}b^{-1}+ab^{-1}aba^{-1}b^{-1}-bab^{-2}-b^{-1}a). 
\end{eqnarray}

By applying the algorithm for calculating $P_u$, for any word $u$ in 2 letters $a$ and $b$, described in \cite[prop. 1.4.1]{CS}, we can prove the following.

\begin{lemma}
\label{some-traces}
One has
\begin{eqnarray*}
P_{abab^{-1}a^{-1}b^{-1}} &=& xy-(x^2+y^2-3)z+xyz^2-z^3,\\
P_{ab^{-1}aba^{-1}b^{-1}} &=& xy(x^2 + y^2-3)-(x^2y^2+x^2+y^2-3)z+2xyz^2-z^3,\\
P_{abab^{-1}a^{-1}b^{-1}aba^{-1}b^{-1}} &=& xy(x^2 + y^2-3)-(x^4+y^4+ 3 x^2 y^2- 5 x^2  - 5 y^2+5) z\\
   && + \, 2 x y (x^2 + y^2-2) z^2-(x^2 y^2 + 2 x^2 + 2 y^2 -5 ) z^3+2 x y z^4 - z^5.
\end{eqnarray*}
\end{lemma}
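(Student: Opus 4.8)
\textbf{Proof proposal for Lemma \ref{some-traces}.}

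The plan is to verify each of the three trace identities by a direct application of the fundamental trace relation \eqref{trace}, reducing words in $F_{a,b}$ of decreasing complexity until everything is expressed in terms of $x=P_a$, $y=P_b$, $z=P_{ab}$ and the auxiliary quantities computed in the proof of Lemma \ref{gamma}, namely $P_{b^2}=y^2-2$, $P_{ab^2}=yz-x$ and $\gamma=P_{aba^{-1}b^{-1}}=x^2+y^2+z^2-xyz-2$. First I would build a short dictionary of traces of length-$\le 4$ words by repeatedly invoking $P_{UV}+P_{UV^{-1}}=P_U P_V$: for instance $P_{a^2}=x^2-2$, $P_{aba}=xz-y$, $P_{ab^{-1}}=P_aP_b-P_{ab}=xy-z$, $P_{ab^2a^{-1}}=P_{b^2}=y^2-2$, and so on. The key simplifying feature is that $P_{aba^{-1}b^{-1}}$ and, more generally, commutator-type words have clean expressions; each target word is essentially a product of a commutator with a shorter word, so one peels off one commutator at a time.

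For the first identity, $P_{abab^{-1}a^{-1}b^{-1}}$, I would write the word as $(ab)(ab^{-1}a^{-1}b^{-1})$ and apply \eqref{trace} with $U=ab$, $V=ab^{-1}a^{-1}b^{-1}$, getting $P_{abab^{-1}a^{-1}b^{-1}}=P_{ab}P_{ab^{-1}a^{-1}b^{-1}}-P_{(ab)(ab^{-1}a^{-1}b^{-1})^{-1}}=zP_{ab^{-1}a^{-1}b^{-1}}-P_{abb a b^{-1}}$ after cyclic reduction; note $P_{ab^{-1}a^{-1}b^{-1}}$ is a conjugate-type expression reducing to a polynomial in $x,y,z$ via another application of \eqref{trace}, and $P_{ab^2ab^{-1}}$ likewise. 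For the second identity, the word $ab^{-1}aba^{-1}b^{-1}$ is handled the same way, splitting as $(ab^{-1})(aba^{-1}b^{-1})$; here the factor $x^2+y^2-3$ that appears signals that an intermediate step produces $P_{a^2}+P_{b^2}+1=x^2+y^2-3$ or similar, which is routine to track. The third identity is the longest: $abab^{-1}a^{-1}b^{-1}aba^{-1}b^{-1}$ is a length-$9$ word, and I would split it as $(abab^{-1}a^{-1}b^{-1})(aba^{-1}b^{-1})$, i.e. $U$ = (the word from the first identity), $V=aba^{-1}b^{-1}$, so that $P_{UV}=P_U\gamma-P_{UV^{-1}}$ where $UV^{-1}=abab^{-1}a^{-1}b^{-1}bab^{-1}a^{-1}=abab^{-1}a^{-1}ab^{-1}a^{-1}=abab^{-1}b^{-1}a^{-1}=aba^{-1}$ after free reduction — wait, that over-collapses, so more care is needed; in fact $UV^{-1}=abab^{-1}a^{-1}b^{-1}\cdot bab^{-1}a^{-1}$ reduces to $abab^{-1}a^{-1}\cdot ab^{-1}a^{-1}=abab^{-1}\cdot b^{-1}a^{-1}=abab^{-2}a^{-1}$, whose trace is $P_{b ab^{-2}}=P_{ab^{-2}b}$-type and reduces to a polynomial in $x,y,z$ by two applications of \eqref{trace}. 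So the third identity becomes $P_U\gamma$ minus a shorter trace already in hand.

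The main obstacle is bookkeeping rather than conceptual: the length-$9$ word in the third identity, after peeling off one commutator, still leaves a length-$6$ or length-$7$ word whose trace must itself be expanded, and one must be careful with free reduction so as not to lose or spuriously cancel letters when forming $UV^{-1}$. I would organize the computation as a table of all the traces of words of length $\le 6$ appearing along the way, reduce each target top-down, and then expand and collect in $z$ to match the stated right-hand sides; since the claimed answers are explicitly polynomial in $x,y,z$ with the indicated degree in $z$, the verification reduces to a finite symbolic check, which can also be confirmed independently by evaluating both sides at a few explicit $SL_2(\BC)$ matrices (e.g. taking $a,b$ diagonal and upper-triangular) if a sanity check is desired.
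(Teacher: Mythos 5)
Your approach is exactly the paper's: the paper's entire proof of this lemma is the single remark that the identities follow from the trace-reduction algorithm of \cite[Prop. 1.4.1]{CS}, i.e.\ repeated use of \eqref{trace}, which is precisely your plan, and your decompositions do yield the stated polynomials --- in particular the second and third identities collapse to $P_{ab^{-1}aba^{-1}b^{-1}}=(xy-z)(\gamma-1)$ and $P_{abab^{-1}a^{-1}b^{-1}aba^{-1}b^{-1}}=\gamma\, P_{abab^{-1}a^{-1}b^{-1}}-P_{ab^{-1}}$, since your $UV^{-1}=abab^{-2}a^{-1}$ is cyclically equivalent to $ab^{-1}$. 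One bookkeeping slip in the first identity: $(ab)(ab^{-1}a^{-1}b^{-1})^{-1}=ab^{2}aba^{-1}$, which is cyclically equivalent to $ab^{3}$, not to $ab^{2}ab^{-1}$ as written; with that correction the first computation reads $z(y^{2}-\gamma)-\big((y^{2}-1)z-xy\big)$, which is the stated answer, so the method is sound and only the free/cyclic reduction needs care.
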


From Lemma \ref{some-traces} we have
\begin{eqnarray*}
P_{abab^{-1}a^{-1}b^{-1}aba^{-1}b^{-1}}-P_{bab^{-1}a^{-1}b^{-1}a} &=& (2-\gamma)( xy-\gamma z),\\
P_{abab^{-1}a^{-1}b^{-1}}+P_{ab^{-1}aba^{-1}b^{-1}}-P_{bab^{-2}}-P_{b^{-1}a} &=& (2-\gamma)(xy-2z).
\end{eqnarray*}
These, together with Eq. \eqref{W} imply that
$$
P_{awa^{-1}b^{-1}} - P_{wb^{-1}} = (2-\gamma) S_{n-1}(\gamma) \left[ ( xy-\gamma z)S_{n-1}(\gamma)-(xy-2z)S_{n-2}(\gamma) \right].
$$

Let $Q(x,y,z)=( xy-\gamma z)S_{n-1}(\gamma)-(xy-2z)S_{n-2}(\gamma)$. Then character variety of $\fb(8n,4n-1)$ is the zero set of the polynomial $(\gamma-2) S_{n-1}(\gamma) Q(x,y,z)$.

By Property 1.2, we have $$S_{n-1}(\gamma)=\prod_{j=1}^{n-1}(\gamma-2\cos\frac{j\pi}{n})=\prod_{j=1}^{n-1}(x^2+y^2+z^2-xyz-2-2\cos\frac{j\pi}{n}).$$ Note that, for any $\delta \in \BC$, the polynomial $x^2+y^2+z^2-xyz+\delta$ is irreducible in $\BC[x,y,z]$. 

\no{
\begin{lemma}
$x^2+y^2+z^2-xyz-2-2\cos\frac{j\pi}{n}$ does not determine the canonical component.
\end{lemma}

\begin{proof}
For $x=y=2$, we have $z=2 \pm 2 i \sin \frac{j\pi}{2n}$. For a discrete faithful representation, Shimizu's lemma says that $|z-2| \ge 1$.
\end{proof}
}

To complete the proof of Theorem 3 for $\fb(8n,4n-1)$, we only need to show the following.

\begin{proposition}
\label{W-irred}
$Q(x,y,z)$ is irreducible in $\BC[x,y,z]$.
\end{proposition}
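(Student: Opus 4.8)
The plan is to follow the same two-variable-change strategy that worked in Proposition \ref{m3} and in the pretzel-link cases: first make a linear change of coordinates that separates $x$ and $y$, then argue irreducibility using the facts that the polynomial has low degree in one variable and that certain Chebyshev polynomials are coprime. Here $Q = (xy - \gamma z)S_{n-1}(\gamma) - (xy-2z)S_{n-2}(\gamma)$ where $\gamma = x^2+y^2+z^2-xyz-2$ depends on $x,y,z$ in a genuinely coupled way, so the key preliminary observation is that $xy$ and $\gamma$ are the only ways $x,y$ enter $Q$; rewriting $Q$ in terms of the quantities $u := xy$ and $\gamma$ (together with $z$) gives $Q = (u-\gamma z)S_{n-1}(\gamma) - (u-2z)S_{n-2}(\gamma)$, which is manifestly of degree $1$ in $u$: it equals $u\big(S_{n-1}(\gamma)-S_{n-2}(\gamma)\big) - z\big(\gamma S_{n-1}(\gamma) - 2 S_{n-2}(\gamma)\big)$.

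First I would establish irreducibility of $Q$ as a polynomial in the ring $\BC[u, z, \gamma]$ (thinking of $u,z,\gamma$ as independent indeterminates). Since the coefficient of $u$ is $S_{n-1}(\gamma)-S_{n-2}(\gamma) \not\equiv 0$, $Q$ is linear in $u$, so it is irreducible provided the gcd of its two coefficients is $1$. The constant term is $z\big(\gamma S_{n-1}(\gamma) - 2S_{n-2}(\gamma)\big)$. Using $\gamma S_{n-1}(\gamma) = S_n(\gamma) + S_{n-2}(\gamma)$ (the Chebyshev recursion), this is $z\big(S_n(\gamma) - S_{n-2}(\gamma)\big)$. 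Now $S_{n-1}(\gamma)-S_{n-2}(\gamma)$ is coprime to $z$ trivially, and it is coprime to $S_n(\gamma)-S_{n-2}(\gamma)$ in $\BC[\gamma]$: any common root $t_0$ of $S_{n-1}-S_{n-2}$ and $S_n - S_{n-2}$ would, via the recursion $S_n = tS_{n-1}-S_{n-2}$, force $S_{n-1}(t_0)(t_0-1)=0$; the case $S_{n-1}(t_0)=0$ together with $S_{n-1}(t_0)=S_{n-2}(t_0)$ contradicts $\gcd(S_{n-1},S_{n-2})=1$ (Property 1.3), and the case $t_0=1$ is handled by noting $S_{n-1}(1)=S_{n-2}(1)$ fails since $S_k(1)$ is periodic with pattern $1,1,0,-1,-1,0,\dots$ so consecutive values are never equal except — one must just check the finitely many residues and rule it out, or observe $S_{n-1}(1)-S_{n-2}(1)\in\{0,1,-1\}$ and is $0$ only when $n\equiv 2\pmod 3$, in which case $S_n(1)-S_{n-2}(1)=-1\ne 0$. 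Either way $\gcd = 1$ in $\BC[z,\gamma]$, so $Q$ is irreducible in $\BC[u,z,\gamma]$.

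Next I would pass back to $\BC[x,y,z]$. The substitution expressing $\gamma$ in terms of $x,y,z$ is $\gamma = x^2+y^2+z^2-xyz-2$ and $u = xy$, so $\BC[u,z,\gamma] \hookrightarrow \BC[x,y,z]$ with $\gamma - (x^2+y^2+z^2-xyz-2) = \gamma - z^2 + 2 + xyz - x^2 - y^2$; but it is cleaner to argue as in Proposition \ref{m3} via the linear change $x \mapsto x+y$, $y \mapsto x-y$, which turns $xy$ into $x^2 - y^2$ and $x^2+y^2$ into $2(x^2+y^2)$, hence turns $\gamma$ into $2x^2+2y^2+z^2 - (x^2-y^2)z - 2$, a polynomial of degree $2$ in $y^2$ — actually degree $1$ in $y^2$: $\gamma = (2-z)y^2 + (2+z)x^2 + z^2 - 2$. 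So after this change $Q$ becomes a polynomial $Q'$ in $\BC[x, y^2, z]$, i.e. it lies in $\BC[x, t, z]$ with $t = y^2$. One shows $Q'$ is irreducible in $\BC[x, y^2, z]$ directly from the irreducibility already proved (the map $(u,z,\gamma)\mapsto$ these coordinates is, on the relevant localization, a coordinate change), and then, exactly as in the last paragraph of the proof of Proposition \ref{m3}, one rules out the remaining possibility $Q' = (yf+g)(yf-g)$ with $f,g \in \BC[x,z]$ by specializing $x=0$ (or $y=0$, i.e. $t = 0$) and checking the resulting one-variable polynomial in $z$ is not a perfect square; the small exceptional value $n=1$ (where $S_{n-2} = S_{-1} = 0$, so $Q = (xy-\gamma z)$ is linear and visibly irreducible) must be treated separately at the outset.

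The main obstacle I expect is the bookkeeping of the change of variables between $\BC[u,z,\gamma]$ and $\BC[x,y,z]$: one must verify that irreducibility is genuinely transported, which requires checking that $\gamma$, as a function of $x,y,z$, together with $xy$ and $z$, does not introduce spurious factorizations — concretely, that $Q$ does not acquire a factor involving $x,y$ only through the "hidden" relation between $\gamma$ and $xy$. This is precisely what the $x\mapsto x\pm y$ trick and the perfect-square obstruction argument from Proposition \ref{m3} are designed to handle, so the risk is mainly in getting the exceptional small cases ($n=1$, and any sporadic coincidence of Chebyshev values at $\gamma = 1$ or $\gamma = 2$) right rather than in the main line of argument.
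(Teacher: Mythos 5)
Your first step — rewriting $Q = u\bigl(S_{n-1}(\gamma)-S_{n-2}(\gamma)\bigr) - z\bigl(S_n(\gamma)-S_{n-2}(\gamma)\bigr)$ with $u=xy$ and proving irreducibility in $\BC[u,z,\gamma]$ as a linear-in-$u$ polynomial with coprime coefficients — is sound (modulo a small slip: a common root $t_0$ of $S_{n-1}-S_{n-2}$ and $S_n-S_{n-2}$ forces $S_{n-1}(t_0)(t_0-2)=0$, not $(t_0-1)$, and $t_0=2$ is excluded because $S_k(2)=k+1$, so consecutive values always differ by $1$). The genuine gap is in the transport back to $\BC[x,y,z]$. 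After your linear change $x\mapsto x+y$, $y\mapsto x-y$, the subring $\BC[u,\gamma,z]$ becomes exactly $\BC[x^2,y^2,z]$ (the transition matrix between $(u,\gamma-z^2+2)$ and $(x^2,y^2)$ has determinant $4$), which sits inside $\BC[x,y,z]$ with index $4$, not $2$: the relevant symmetry group is the Klein four-group of sign changes in $x$ and $y$ separately. So irreducibility in $\BC[u,z,\gamma]$ does \emph{not} "directly" give irreducibility in $\BC[x,y^2,z]$ — that is already an index-$2$ step you have skipped — and the final step to $\BC[x,y,z]$ cannot be reduced to ruling out $Q'=(yf+g)(yf-g)$ with $f,g\in\BC[x,z]$, because $Q'$ has degree $2n$ in $y$ (and in $x$), so the putative conjugate factors have degree $n$ in $y$, not degree $1$. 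The template from the last paragraph of Proposition \ref{m3} applies there only because that $Q'$ is quadratic in $y$. Your argument as written therefore only excludes a small subfamily of the possible factorizations. The gap is fillable — one can check that every nontrivial orbit of irreducible factors under the sign-change group forces $Q'\mid_{x=0}$ or $Q'\mid_{y=0}$ to be a perfect square up to a constant, and then rule this out via the leading coefficients $(2\pm z)^{n-1}(1\mp 2z - z^2)$, which are not squares in $\BC[z]$ — but this analysis is the heart of the matter and is missing.

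For comparison, the paper sidesteps the three-variable descent entirely: it observes that $Q=zx^{2n}+(\text{lower order in }x)$, so a factorization in $\BC[x,y,z]$ must split the leading coefficient $z$ between the factors, and hence survives specialization at any $z_0\neq 0$; it then sets $z=2$, where $\gamma$ degenerates to a function of $x-y$ alone, and the substitution $(x,y)\mapsto(x+y,x-y)$ yields a two-variable polynomial $R$ that is \emph{linear} in $x^2$ over $\BC[y]$. That structure gives irreducibility in the genuine polynomial ring $\BC[x^2,y]$ at once, leaving only a single index-$2$ perfect-square check. If you want to keep your three-variable route, you must either carry out the full Klein-four analysis sketched above or find a similar specialization that collapses the degree of the extension.
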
 

\begin{proof}

We have $Q=zx^{2n}+Q'$, for some $Q' \in \BC[x,y,z]$ satisfying $\deg_x(Q')<2n$. Since $Q \mid_{z=0} \, = xy(S_{n-1}(x^2+y^2-2)-S_{n-2}(x^2+y^2-2)) \not\equiv 0$, $z$ is not a factor of $Q$.

Assume that $Q$ is reducible in $\BC[x,y,z]$. Then there exist $1 \le k \le 2n-1$ and $f,g \in \BC[x,y,z]$ such that $\deg_x(f)<k$, $\deg_x(g)<2n-k$ and $Q=(x^k+f)(zx^{2n-k}+g)$. It follows that, for any $z_0 \not= 0$, $Q \mid_{z=z_0}$ is reducible in $\BC[x,y]$.

We have $Q=xy(S_{n-1}(\gamma)-S_{n-2}(\gamma))+z(2 S_{n-2}(\gamma) - \gamma S_{n-1}(\gamma))$. Let $R(x,y)$ be the polynomial in $\BC[x,y]$ defined by $R(x,y)=Q(x+y,x-y,2)$. Then 
\begin{eqnarray*}
R &=& (x^2-y^2)(S_{n-1}(\delta)-S_{n-2}(\delta))+4S_{n-2}(\delta)-2\delta S_{n-1}(\delta)\\
  &=& (x^2-y^2)(S_{n-1}(\delta)-S_{n-2}(\delta))-2(S_{n}(\delta)-S_{n-2}(\delta)),
\end{eqnarray*}
where $\delta=4y^2+2$. Since $S_{n-1}(\delta)-S_{n-2}(\delta) \not\equiv 0$ and $\gcd(S_{n-1}(\delta)-S_{n-2}(\delta), S_{n}(\delta)-S_{n-2}(\delta))=1$ in $\BC[y]$, we have that $R$ is irreducible in $\BC[x^2,y]$. Moreover, since $R \mid_{x=0,y=0} \, =-4 \not= 0$, $x$ is not a factor of $R$ in $\BC[x,y]$.

Since $Q \mid_{z=2}$ is reducible in $\BC[x,y]$, so is $R$. Hence $R$ must have the form $R=(xf+g)(xf-g)$, for some $f,g \in \BC[y] \setminus \{0\}$ satisfying $\gcd(f,g)=1$. In particular, we have 
\begin{eqnarray*}
f^2 &=& S_{n-1}(\delta)-S_{n-2}(\delta)=\prod_{j=1}^{n-1} (\delta-2\cos \frac{(2j-1)\pi}{2n-1})\\
    &=& 4^{n-1} \prod_{j=1}^{n-1} (y+i\sin \frac{(2j-1)\pi}{4n-2})(y-i\sin \frac{(2j-1)\pi}{4n-2}).
\end{eqnarray*}
This can not occur unless $n=1$. In the case $n=1$, $R=x^2-(9y^2+4)$ is irreducible in $\BC[x,y]$. The proposition then follows.
\end{proof}

This complete the proof of Theorem 3 for the two-bridge link $\fb(8n,4n-1)$. The proof of Theorem 3 for $\fb(8n+4,4n+1)$ is similar.

\no{
In this case $w=(bab^{-1}a^{-1})^nbab(a^{-1}b^{-1}ab)^n$ and $\pi=\la a,w \mid wa=aw \ra$. The character variety is the zero set of $P_{wb^{-1}}-P_{awa^{-1}b^{-1}}.$

We write $(bab^{-1}a^{-1})^n=S_n(t)I-S_{n-1}(t)(bab^{-1}a^{-1})^{-1}$ and $(bab^{-1}a^{-1})^n=S_n(t)I-S_{n-1}(t)(a^{-1}b^{-1}ab)^{-1}$.

We have
\begin{eqnarray*}
P_{(bab^{-1}a^{-1})^nbab(a^{-1}b^{-1}ab)^nb^{-1}}&=& P_{babb^{-1}}S^2_{n}(t)+P_{(bab^{-1}a^{-1})^{-1}bab(a^{-1}b^{-1}ab)^{-1}b^{-1}}S^2_{n-1}(t)\\
                   &&-(P_{(bab^{-1}a^{-1})^{-1}babb^{-1}}+P_{bab(a^{-1}b^{-1}ab)^{-1}b^{-1}})S_{n}(t)S_{n-1}(t)\\
                   &=& P_{ba}S^2_{n}(t)+P_{aba^{-1}bab^{-1}}S^2_{n-1}(t)-(P_{ab}+P_{ba})S_{n}(t)S_{n-1}(t).
\end{eqnarray*}
Similarly
\begin{eqnarray*}
P_{a(bab^{-1}a^{-1})^nbab(a^{-1}b^{-1}ab)^na^{-1}b^{-1}}
                   &=&P_{ababa^{-1}b^{-1}}S^2_{n}(t)+P_{ab}S^2_{n-1}(t)-(P_{a^2b^2a^{-1}b^{-1}}+P_{ab})S_{n-1}(t).
\end{eqnarray*}
Hence
\begin{eqnarray*}
P_{waa^{-1}b^{-1}}-P_{awa^{-1}b^{-1}} &=& (P_{ba}-P_{ababa^{-1}b^{-1}})S^2_{n}(t)+(P_{aba^{-1}bab^{-1}}-P_{ab})S^2_{n-1}(t)\\
&&-(P_{a^2b^2a^{-1}b^{-1}}-P_{ba})S_{n}(t)S_{n-1}(t)\\
&=&(2-t)(S_{n}(t)-S_{n-1}(t)) \left[ zS_{n}(t)-(xy-z)S_{n-1}(t) \right].
\end{eqnarray*}
}


\begin{thebibliography}{99999}

\bibitem[Bu]{Bu} G. Burde, {\em $SU(2)$-representation spaces for two-bridge knot groups}, Math. Ann. \textbf{288} (1990), 103--119.

\bibitem[BZ]{BZ} G. Burde and H. Zieschang, {\em Knots}, de Gruyter Stud. Math., vol. 5, de Gruyter, Berlin, 2003.

\bibitem[CS]{CS} M. Culler and P. Shalen, {\em Varieties of group representations and splittings of 3-manifolds}, Ann. of Math. (2) \textbf{117} (1983), no. 1, 109--146. 

\bibitem[Ha]{Ha} S. Harada, \textit{Canonical components of character varieties of arithmetic two-bridge link complements}, preprint 2012, arXiv:1112.3441.

\bibitem[La1]{La} E. Landes, \textit{Identifying the canonical component for the Whitehead link}, Math. Res. Lett. \textbf{18} (2011), no. 4, 715--731.

\bibitem[La2]{La-thesis} E. Landes, \textit{On the canonical components of character varieties of hyperbolic two-bridge link complements}, Thesis (Ph.D.)-The University of Texas at Austin (2011), 93 pp.

\bibitem[LM]{LM} A. Lubotzky and A. Magid, {\em Varieties of representations of finitely generated groups}, Memoirs of the AMS {\bf 336} (1985).

\bibitem[LT]{LT} T. Le and A. Tran, \textit{The Kauffman bracket skein module of two-bridge links}, Proc. Amer. Math. Soc. \textbf{142} (2014), no. 3, 1045--1056.

\bibitem[MPL]{MPL} M. Macasieb, K. Petersen and R. van Luijk, {\em On character varieties of two-bridge knot groups}, 
Proc. Lond. Math. Soc. {\bf 103} (2011), 473--507.

\bibitem[NT]{NT} F. Nagasato and A. Tran, {\em Presentations of character varieties of two-bridge knots using Chebyshev polynomials}, arXiv:1301.0138.

\bibitem[PT]{PT} K. Petersen and A. Tran, in preparation.

\bibitem[Ri]{Ri} R. Riley, {\em Algebra for Heckoid groups}, Trans. Amer. Math. Soc. \textbf{334} (1992), no. 1, 389--409.

\bibitem[Th]{Th} W. Thurston, {\em Three-dimensional geometry and topology}, Vol. 1 edited by Silvio Levy,  Princeton Mathematical Series, 35, Princeton University Press, Princeton, NJ, 1997.

\bibitem[Tr]{Tr} A. Tran, {\em The universal character ring of the $(-2,2m+1,2n)$-pretzel link}, Internat. J. Math. \textbf{24} (2013), no. 8, 1350063, 13 pp.

\end{thebibliography}
\end{document}